\newtheorem{lemma}{Lemma}
\newtheorem{remark}{Remark}
\newtheorem{theorem}[]{Theorem}
\newtheorem{definition}[]{Definition}
\newtheorem{prop}{Proposition}
\let\pa\partial
\renewcommand{\L}{\mathcal{L}}
\title[Inf-Sup Networks for PDEs]
{Inf-Sup neural networks for high-dimensional elliptic PDE problems}
\author{Xiaokai Huo}
\email{xhuo@iastate.edu}
\author{Hailiang Liu}
\email{hliu@iastate.edu}
\address{Department of Mathematics, Iowa State University, Ames, IA 50014}
\subjclass{65K10, 90C26} 
\keywords{PDE stability, minimax optimization, machine learning, neural networks, error bounds}
\begin{document}
\maketitle 

\begin{abstract}
Solving high dimensional partial differential equations (PDEs) has historically posed a considerable challenge when utilizing conventional  numerical methods, such as those involving domain meshes. 
Recent advancements in the field have seen the emergence of neural PDE solvers, leveraging deep networks to effectively tackle high dimensional PDE problems.  This study introduces Inf-SupNet, a model-based unsupervised learning approach designed  to acquire solutions for a specific category of elliptic PDEs. The fundamental concept behind Inf-SupNet involves incorporating 
the inf-sup 
formulation of the underlying PDE into the loss function. 
The analysis reveals that the global solution error can be bounded by the sum of three distinct errors: the numerical integration error, the duality gap of the loss function (training error), and the neural network approximation error for functions within Sobolev spaces. To validate the efficacy of the proposed method, numerical experiments conducted in high dimensions demonstrate its stability and accuracy across various boundary conditions, as well as for  both semi-linear and nonlinear PDEs.
\end{abstract}

\section{Introduction}
% \documentclass[]{amsart}
% \usepackage{color}
% \begin{document}

In recent years, the burgeoning field of deep neural networks (DNNs) has propelled the widespread use of machine learning techniques in scientific computing. 
Renowned for their status as universal function approximators, 
DNNs have gained popularity as ansatz spaces for approximating solutions to diverse  partial differential equations(PDEs)    
%Solving partial differential equations (PDEs) using deep neural networks has attracted much attention in recent years 
\cite{yu2018deep,yu2022gradient,raissi2017physics,zang2020weak,lagaris1998artificial,rudd2015constrained,han2017deep,beck2019machine}. In contrast to conventional numerical methods such as finite elements (FEM), finite differences (FDM), or finite volumes (FVM), which rely on grids or meshes, neural networks offer a mesh-free, scalable approach that holds promise for solving high-dimensional PDEs. In 
scenarios where ample labeled training data can be obtained through specific numerical methods, supervised learning becomes a viable option for training DNNs to approximate PDE solutions, as exemplified in  \cite{raissi2017inferring,raissi2017machine,owhadi2015bayesian,KLY2017,kutyniok2022theoretical}.
However, in many contexts, a learning framework is required to approximate PDE solutions with minimal or potentially no data.    
In such situations, the underlying PDE and associated conditions must be seamlessly integrated  into an unsupervised learning strategy \cite{lagaris1998artificial,raissi2017physics}. The method presented here falls into this category.

The idea of incorporating PDEs and associated conditions with neural networks for inferring PDE solutions can be traced back to a 1994 paper \cite{dissanayake1994neural}. In this early work,  neural networks were limited to one hidden layer. Subsequently, a similar idea was explored by  \cite{lagaris1998artificial}, % psichogios1992hybrid, this is only for ODE modeling! 
where boundary conditions were manually enforced through a change of variables. These initial concepts evolved into more sophisticated approaches such as  Physics-informed neural networks (PINNs) \cite{raissi2017physics} and the deep Galerkin method \cite{sirignano2018dgm},
both featuring deeper hidden layers. Among these,  PINNs have gained popularity in the field  of scientific machine learning in recent years. 
For a differential equation $\mathcal{A}(u)=f$ defined on the domain $\Omega$ with boundary condition $\mathcal{B} u=g$ on $\partial \Omega$, PINN trains a neural network to minimize the following functional when evaluated on sampling points, 
$$
{\L}(v)=\int_{\Omega} |\mathcal{A}(v)(x)-f|^2dx+\lambda \int_{\partial \Omega} |\mathcal{B}(v)(x)-g|^2 ds,
$$
where $\lambda>0$ is a weight parameter. However, minimizing such a functional may lead to a higher order PDE than the original one, posing increased difficulty when the solution regularity is relatively low. As an improvement, the deep Least-Squares method introduced in \cite{CCLL2020} employs  a least-squares functional based on the first order system of scalar second-order elliptic PDEs, treating  boundary conditions in a balanced manner.  Various  modifications have been proposed to enhance the accuracy of PINNs or adapt them for specific problems.  Examples include the gradient-enhanced PINNs (gPINNs), which  incorporate higher order solution derivatives \cite{yu2022gradient}, and the Variational PINNs (VPINNs) \cite{kharazmi2019variational}, which incorporate the weak form of the PDE. For a comprehensive overview of PINNs and their applications across diverse fields, the reader is directed to \cite{cai2021physics,mahmoudabadbozchelou2022nn,mao2020physics,haghighat2021physics,rasht2022physics,yang2021b}. 
 
Various advancements have been made in formulating loss functions for solving PDE problems with deep neural networks (DNNs). The deep Ritz method (DRM) \cite{yu2018deep}  employs the energy functional of the Poisson equation and is particularly applicable to problems grounded in a minimization principle. 
Variational neural networks (VarNets) \cite{khodayi2020varnet} utilize the variational (integral) form of advection-diffusion equations, with test functions sampled from linear spaces.  Using lower-order derivatives with training over space-time regions, the method using VarNets gets more effective in capturing the PDE solutions.
 %allows training over measure non-zero regions of space-time. 
In the context of hyperbolic conservation laws and the pursuit of entropy solutions, weak PINNs (wPINNs) were introduced, incorporating the Kruzkov entropy, as analyzed in \cite{de2022weak}. Another notable approach is the Weak Adversarial Networks (WANs) \cite{zang2020weak}, which also leverage the weak PDE formulation. However, WANs introduce an additional adversarial network to represent the test function, enhancing the overall robustness and efficacy of the method.
 
A notable limitation of weak PDE formulations,  such as the Weak Adversarial Networks (WANs), is their intrinsic lack of numerical stability,  as highlighted in \cite{D2RM}. Addressing this issue, the Deep Double Ritz method, introduced in the same reference \cite{D2RM}, adopts a nested Ritz minimization strategy to optimize test functions and enhance stability. In this paper, we propose a simple alternative, namely Inf-SupNet. %This is DNNs based approach for solving PDEs. 
Our approach originates from a constraint optimization problem:
$$
\inf_{u\in X} \int_{\partial \Omega} |\mathcal{B}u(x)-g|^2 ds  \quad \text{subject to} \quad \mathcal{A} u=f \quad \text{in} \quad \Omega,  
$$
which is demonstrated to be equivalent to the aforementioned PDE problem. In such formulation, the least square of the boundary data mismatch is the primary loss, with the underlying PDE serving as a constraint. This constraint is elevated to  the primal-dual space as an inf-sup problem:
 $$
 \inf_{u\in X}\sup_{v\in Y^*}
 \|\mathcal{B}u(x)-g\|_{\partial \Omega}^2 +( \mathcal{A} u-f, v)_{\Omega},
 $$
 where $\mathcal{A}:X\to Y$ represents the underlying differential operator, and $Y^*$ is the dual space of $Y$. InfSupNet employs this loss function, leveraging  the inf-sup form of the PDE problem and employing  an adversarial network for the Lagrangian multiplier $v$.  Our method provides  flexibility in handling the dual variable $v$ based on the solution regularity of $u$. 

Inspired by a recent work \cite{liu2023primal} combining primal-dual hybrid gradient methods with spatial Galerkin methods for approximating conservation laws with implicit time-discretization, our formulation differs as we introduce neural networks to represent the trial solution and the Lagrangian multiplier. This renders the resulting optimization problem non-quadratic in the parameter space.

Regarding the theoretical accuracy of the InfSupNet algorithm, we derive an error estimate based on the stability of elliptic problems. The error is shown to be bounded by the numerical integration error, the neural network approximation error, and the training error. 
The  min-max structure facilitates the quantification of the training error through the duality gap between the loss for minimization and maximization steps.

{\bf Contribution.} 
Our key contributions for learning solutions of PDE problems are as follows. 
\begin{itemize}
    \item Present an InfSupNet, a novel method for learning solutions of PDE problems using deep neural networks. 
    \item Prove equivalence of the inf-sup problem to the original PDE problem. 
    \item Show the global approximation error is induced by three sources:  the training error, the sampling error, and the approximation error of neural networks; and each error is shown to be asymptotically small under reasonable assumptions.
    \item Demonstrate through experiments both stability and accuracy  of the method for high-dimensional elliptic problems posed on both  regular and irregular domains.
\end{itemize}

\subsection*{Related work}  Here we delve into further
discussions on related works,  encompassing neural PDE solvers, sampling methods, and  error estimation.

\subsection*{Mesh Dependent Neural Methods:} 
Our method, along with other mesh-free approaches like  PINNs, DRM, WANs, relies on auto-differentiation to compute the empirical loss, 
eliminating the need for a mesh grid. This enables obtaining solutions at new locations beyond the sampling points directly after the network is trained. 
In contrast, mesh-based methods, as seen in  \cite{khara2021neufenet,khara2022neural,khoo2021solving,bhatnagar2019prediction}, employ  conventional numerical techniques such as finite difference or finite element methods, heavily relying on mesh grids for computations.  
While mesh-based methods permit the use of equation parameters on grids as input for network training and solving a range of PDEs with varying parameters, changing the mesh resolutions  alter network structures, and 
this will necessitate new training. In contrast, mesh-free methods maintain the underlying network when changing the resolution of numerical integration, making them more adaptable, especially in high dimensions 
\cite{cuomo2022scientific}.

\subsection*{Deep BSDE Solvers:}  %reinforcement learning.} 
Establishing a connection between parabolic PDEs and backward stochastic differential equations (BSDEs), 
\cite{han2017deep} introduces a deep BSDE method to evaluate solutions at any given space-time locations for high dimensional semi-linear parabolic PDEs. This method is extended to handle fully nonlinear second-order PDEs in \cite{beck2019machine}.

\subsection*{Operator Learning:} A novel approach involves learning infinite dimensional operators with neural networks 
\cite{lu2021learning,li2020fourier}. Universal approximation theorems for operator learning have been established \cite{chen1993approximations}, operator learning allows the solution of  a family of PDEs with one neural construction. This approach operates as a form of supervised learning, requiring only data, devoid of prior knowledge about the underlying PDE.  

\subsection*{Data Sampling Techniques:}  Apart from utilizing deeper hidden layers for solving PDEs, as discussed earlier, it is also illustrated that the training points can be obtained by a random sampling of the domain rather than using a mesh, which is beneficial in higher-dimensional problems \cite{BN2018, sirignano2018dgm}. 
Indeed,  different numerical integration schemes are employed in all mesh-free methods. For an in-depth comparison study of quadrature rules used in solving PDEs with deep neural networks, please refer to \cite{RTOP22}.

\subsection*{Error Bounds:} Although not as mature as the applications of neural PDE solvers in various domains, the theoretical development has made significant strides, presenting rigorous bounds on various error sources 
\cite{shin2023error,mishra2022estimates,mishra2023estimates,jiao2021error,minakowski2023priori,zeinhofer2023unified}. Typically total error is decomposed into the approximation error of neural networks, numerical sampling error and training error, each being bounded through  different strategies.  
For PINNs, as identified in \cite{mishra2023estimates}, the strategy involves considering regularity and stability of PDE solutions 
and quadrature error bounds to estimate the generalization gap between continuous and discrete versions of the PDE residual. %by using stability theory of PDE problems, numerical integration theory and optimization theory, respectively. 
In deriving error bounds for InfSupNet, we pursue a related yet distinct approach. Beyond the error on PDE solutions, we also estimate the error of the Lagrangian multiplier. Moreover, due to the use of min-max optimization,
the training error is no longer solely quantified by the loss function but rather by the duality gap of the loss function.

\subsection*{Organization} The paper is organized as follows. In the next section, we describe our method. First, we reformulate the PDE problem as a constraint optimization in Section 2.1, and discuss the network structure in Section 2.2. Our algorithm is given in Section 2.3. Section 3 is devoted to the theoretical analysis of our algorithm. We prove the equivalence between the optimization problem and the PDE problem in Section 3.1, Theorem \ref{thm1}. In Section 4, we derive an error estimate for the proposed algorithm, with a decomposition of the error in Section 4.1, Theorem \ref{thm2}, with the bounds of each part provided in Section 4.2. We demonstrate in several numerical experiments the  accuracy
of our algorithm in Section 5.
Finally, some concluding remarks are given in Section 6. 

\noindent\textbf{Notations.}
We take $L^2$ to be the square integrable space with norm defined by  $\|f\|_{L^2(\Omega)} = ( \int_{\Omega} |f(x)|^2 dx)^{1/2} $ with $\Omega \subset \mathbb{R}^d$ being an open domain of $\mathbb{R}^d$. The boundary set of the domain $\Omega$ is denoted by $\partial\Omega$. We use $H^s(\Omega)$  to denote the Sobolve space with the up-to $s$-th derivatives bounded in $L^2$.
We use $\mathcal{U}_\theta$ to denote a U-network, and $\mathcal{V}_\tau$ as  \text{V-network}, with $\theta, \tau$ as respective network parameters.  
For a positive integer $m$, we use $[m]$ to represent $\{1, \cdots, m\}$.

\section{Proposed Method}

We initiate by presenting an abstract form of the PDE problem, using the second order  linear elliptic equations as a canonical example. 

%\emph
\subsection{Reformulation as constrained optimization} 
Consider solving a boundary value problem  of the form 
\begin{equation}\label{eq}
\begin{aligned}
    \mathcal{A} u &=f ,\quad \text{in }\Omega,\\
    \mathcal{B} u &=g,\quad \text{on }\partial\Omega,
\end{aligned}
\end{equation}
where $\mathcal{A}$ is a differential operator with respect to $x$,  $\mathcal{B}$ is a boundary operator which represents Dirichlet, Neumann, Robin or mixed boundary conditions. Here $\Omega$ is a bounded domain $\Omega\subset \mathbb{R}^d$, for dimension $d\ge 1$, and
$f=f(x),g=g(x)$ are given functions defined in $\Omega$ and on the boundary $\partial\Omega$, respectively. Here $u=u(x):\Omega  \mapsto\mathbb{R}$ is the unknown function we want to seek for. To proceed, we assume that the above boundary value problem is well-posed.    
 Here as an example, we take
 \begin{align}\label{eq:el}
     \mathcal{A} u := - \sum_{i=1}^d \left( \sum_{j=1}^d a_{ij}(x)u_{x_j}\right)_{x_i} + \sum_{i=1}^d b_{i}(x) u_{x_i} + c(x) u,
 \end{align}
 and 
 \begin{align}\label{eq:elB}
     \mathcal{B} u:= \alpha u + \beta \frac{\partial u}{\partial n},
 \end{align}
 with $\alpha>0$, $\beta\ge 0$, and $\frac{\partial u}{\partial n}$ denoting the directional derivative of $u$ along the exterior normal direction $n=n(x)$ at the boundary point $x\in \partial \Omega$.  Here $a_{ij}, b_i, c_i$ are bounded functions defined in $\Omega$ and $a_{ij}$ are differentiable and satisfy $\sum_{i,j=1}^d a_{ij}(x) \xi_i\xi_j \ge \lambda \|\xi\|^2$ for some constant $\lambda>0$ for any $\xi\in\mathbb{R}^d$ and $x\in\Omega$, where $a_{ij}=a_{ji}$ for all $i, j\in [d]$ (ellipticity). For the above elliptic equations, we assume { $f\in L^2(\Omega), g\in H^{3/2}(\partial\Omega)$}, so the corresponding boundary value problem is well-posed with solution in $H^2(\Omega)$ (see for example \cite{grisvard2011elliptic}).

To solve \eqref{eq},  we reformulate it as a constraint optimization problem
\begin{equation}
    \inf_{u\in U} \left\{   \frac12 \|\mathcal{B} u -g\|_{L^2(\partial\Omega)}^2\right\},\quad \text{where } %:  =  \frac12 \int_{\partial \Omega} |\mathcal{B} u -g|^2ds  \right\}, 
    U=\{ u\in H^2(\Omega), \quad \mathcal{A} u =f \;\; \text{ in }\Omega\}. 
    \label{eq:co}
\end{equation}
In this formulation, the objective function is the least square of the boundary data mismatch, and the PDE itself serves as a constraint. The equivalence between this optimization problem and equation \eqref{eq} is obvious, and existence of problem  (\ref{eq:co}) will be established in the next section.

To remove the PDE constraint, we introduce a Lagrangian multiplier $v\in L^2(\Omega)$, so that the above problem can be converted to an inf-sup optimization problem
\begin{equation}
    \inf_{u\in H^2(\Omega)}\sup_{v\in L^2(\Omega)}  \mathcal{L}(u,v):=  \frac12 \|\mathcal{B} u -g\|_{L^2(\pa\Omega)}^2 + (\mathcal{A} u -f ,v),\label{eq:is}
\end{equation}
where $(\cdot,\cdot)$ denotes the $L^2$ inner product in $\Omega$. Here the loss function is made up of two terms: the first term takes into consideration of the boundary condition and the second term accounts for the underlying PDE. The saddle point of the above Lagrangian can be shown to be the solution to problem \eqref{eq}, see Theorem \ref{thm1} in the next section. 

\subsection {Neural networks}  
Here we use two feedforward  neural networks (FNNs) to represent the trial solution and the Lagrangian multiplier. 
Given input $x$, an FNN 
transforms it to an output, which can be seen as a recursively defined function $u_\theta$ on  $\mathbb{R}^d$ into $\mathbb{R}$:
$$
u_\theta(x) = h_m \circ \sigma \cdots h_2 \circ  \sigma \circ  h_1(x).
$$
Here $h_k$ is an affine map from $\mathbb{R}^{N_{k-1}}$ to $\mathbb{R}^{N_k}$ (here we take $N_0=d, N_m=1$):
$$
h_k(z) = W_k z+b_k, \quad \theta=\{W_k, b_k\}_{k=1}^m,
$$
where $W_k$ are $N_k \times N_{k-1}$ matrices (network weights) and $b_k$ are $N_k$--vectors (network biases). 
$\sigma$ is a nonlinear activation function. As the present method can be combined with other neural network constructions, FNN serves a base model class to demonstrate our method. Moreover, by the universal approximation theorem of FNNs \cite{cybenko1989approximation, pinkus1999approximation,barron1993universal}, with the use of a proper activation, FFNs of sufficient width and depth can model any given $H^2$ and $L^2$ 
function which covers our solution space and the function space of the Lagrangian multiplier. Therefore, we use FFNs to model both trial solutions and the Lagrangian multiplier.
Required by assumptions of the universal approximation theorems (\cite{cybenko1989approximation, pinkus1999approximation,barron1993universal}), the activation function $\sigma$ can be taken as any differentiable non-constant bounded function. Examples include sigmoid function and $\tanh$.

\subsection {The Inf-Sup network}
The inf-sup reformulation of the PDE problem led us to design the inf-sup network, which consists of  a neural network $\mathcal{U}_\theta$ representing the trial solution and another adversarial network $\mathcal{V}_\tau$ representing the Lagrangian multiplier. The subscripts indicate the parameters of each neural network adopted. We can thus approximate problem \eqref{eq:is} by 
\begin{align} \label{loss}
   \min_\theta \max_\tau L(\theta,\tau):=  \mathcal L(u_\theta,v_\tau),
\end{align}
with $u_\theta \in \mathcal U_\theta$ and $v_\tau \in \mathcal V_\tau$. The differential operator $\mathcal A$ and $\mathcal B$ in $\mathcal L$ are computed via auto-differentiation of neural networks.

With the above setup, the approximation solution to the inf-sup optimization problem \eqref{loss} can be obtained by finding a saddle point for  the following min-max problem:  
\begin{align}
    \min_{\theta} \max_{\tau} { \hat{L}}(\theta,\tau), 
     \label{eq:isn}
\end{align}
where  $\hat L $ denotes the evaluation of ${\mathcal L}(u_\theta,v_\tau)$ over samples
using the Monte-Carlo integration  \cite{robert1999monte}: 
\begin{align}\label{eq:hatLL}
    \hat L(\theta,\tau) := \hat {\mathcal L}(u_\theta,v_\tau)= &\frac{|\partial\Omega|}{2N_b}\sum_{i=1}^{N_b} (\mathcal{B} u_{\theta}(x_b^i)-g(x_b^i))^2 
    + \frac{|\Omega|}{N} \sum_{i=1}^N (\mathcal{A} u_{\theta}(x^i) - f(x^i))\cdot v_\tau (x^i).
\end{align}
Here $N_b$ is the number of sampling points $\{x_b^i\}_{i=1}^{N_b}$ on the boundary $\partial \Omega$ and $N$ is the number of sampling points $\{x^i\}_{i=1}^N$ inside the domain $\Omega$. 
Note that other numerical integration methods could also be used.

 Problem \eqref{eq:isn} can be solved using techniques of the min-max optimization such as Stochastic Gradient Descent--Ascent \cite{razaviyayn2020nonconvex} .  
The algorithm is described in Algorithm \ref{alg:AGF}.
\begin{algorithm}[ht]
\caption{\small Training of InfSupNet}%Inf-Sup Network for solving PDEs. %the Poisson equation.
\begin{algorithmic}
\label{alg:AGF}
\FOR{number of training iterations}
    \STATE{$\bullet$ Sample $N$ points $\{x^1,\dots,x^N\}$ randomly in the domain $\Omega \subset \mathbb{R}^d$.}
    \STATE{$\bullet$ Sample $N_b$ points $\{ {x}_b^{(1)}, \dots, {x}_b^{(N_b)} \}$ randomly from  the boundary set $\partial\Omega \subset \mathbb{R}^{d-1}$.}
  \FOR{$k$ steps}
    \STATE{$\bullet$ Update the $\mathcal{V}_\tau$-network by ascending its (stochastic) gradient:
        \[
        \tau \gets \tau + \eta \nabla_\tau \hat{L}(\theta,\tau)
        \]}
   \ENDFOR
   \FOR{$\ell$ steps}
    \STATE{$\bullet$ Update the $\mathcal U_\theta$-network by descending its (stochastic) gradient:
        \[
        \theta\gets \theta-\eta \nabla_\theta \hat L(\theta,\tau)
        \]}
    \ENDFOR
  \ENDFOR

  \RETURN { $\theta, \tau$ for the parameters of the neural network $\mathcal U_\theta,\mathcal V_\tau$.}
\end{algorithmic}
\end{algorithm}

\section{Theoretical results}
In this section, we provide theoretical analysis for our proposed method.
For simplicity of presentation, we present  analysis and the result only for the Poisson equation with $\mathcal{A} = -\Delta$ being the negative Laplacian and $\mathcal{B}=I$ being the identical map. The results and analysis extend well to problem \eqref{eq:el}-\eqref{eq:elB} in a straightforward fashion. The Poisson problem reads as 
\begin{equation}\label{eq:1}  
\begin{aligned}
    -\Delta u = f, &\quad \text{in } \Omega, \\
     u = g,&\quad \text{on } \partial \Omega.
\end{aligned}
\end{equation}

\subsection{ Equivalence between the inf-sup problem \eqref{eq:is} and the PDE formulation}
In this subsection, we prove the following theorem.
\begin{theorem}\label{thm1}
  Assume $\Omega$ is a bounded domain with $\partial\Omega$ being smooth, and  $f\in L^2(\Omega), { g\in H^{2/3}(\partial\Omega)}$. Then $u$ is the unique solution to \eqref{eq:1} if and only if it solves the optimization problem \eqref{eq:is} with $\mathcal B u= u$ and $\mathcal A u = -\Delta u$.    
\end{theorem}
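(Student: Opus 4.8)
The plan is to prove the two implications separately, treating the inf-sup problem \eqref{eq:is} via the structure of its saddle point. First I would observe that, for any fixed $u$, the inner supremum $\sup_{v\in L^2(\Omega)} (\mathcal{A}u - f, v)$ equals $+\infty$ unless $\mathcal{A}u = f$ in $L^2(\Omega)$, in which case the inner term vanishes identically. Hence any candidate minimizer $u^*$ of the outer problem must satisfy $-\Delta u^* = f$ in $\Omega$, and for such $u^*$ the value of $\mathcal{L}(u^*, v)$ reduces to $\tfrac12\|u^* - g\|_{L^2(\partial\Omega)}^2$, independent of $v$. This immediately reduces \eqref{eq:is} to the constrained optimization \eqref{eq:co}.

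Next, for the forward direction, suppose $u$ solves \eqref{eq:1}. Then $-\Delta u = f$ so the constraint is met, and $u = g$ on $\partial\Omega$ so $\tfrac12\|u-g\|_{L^2(\partial\Omega)}^2 = 0$, which is the global minimum of a nonnegative functional; thus $(u, v)$ is a saddle point for any $v\in L^2(\Omega)$ (the value is $0$, and one checks $\mathcal{L}(u,v) \le \mathcal{L}(u',v)$ fails to be the relevant inequality — rather one verifies $\sup_v \mathcal{L}(u,v) = 0 \le \sup_v \mathcal{L}(u',v)$ for all $u'$, and $\inf_{u'} \mathcal{L}(u', v) \le 0 = \mathcal{L}(u,v)$). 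Conversely, suppose $u$ solves \eqref{eq:is}. By the reduction above, $u$ must satisfy $-\Delta u = f$ in $\Omega$ (else the sup is $+\infty$ and $u$ is not a minimizer, provided the feasible set is nonempty — which it is, since the well-posedness assumption on \eqref{eq:1} furnishes at least one feasible function, indeed one with zero objective value). Since the well-posed problem \eqref{eq:1} has a solution $\tilde u \in H^2(\Omega)$ with $\tilde u = g$ on $\partial\Omega$, the infimum of the objective over the feasible set is $0$ and is attained; hence $u$ must also achieve objective value $0$, i.e. $\|u - g\|_{L^2(\partial\Omega)} = 0$, so $u = g$ on $\partial\Omega$ (as elements of $L^2(\partial\Omega)$, which by trace regularity pins down the boundary values). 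Therefore $u$ solves \eqref{eq:1}, and uniqueness follows from the assumed well-posedness of the boundary value problem.

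The main technical point — and the step I would be most careful about — is the justification that a minimizer of \eqref{eq:is} actually satisfies the PDE constraint pointwise/in $L^2$, i.e. that the inner supremum genuinely forces $\mathcal{A}u = f$ rather than merely $\mathcal{A}u - f$ being orthogonal to some dense subspace. Since $v$ ranges over all of $L^2(\Omega)$ and $\mathcal{A}u - f \in L^2(\Omega)$ (using $u\in H^2(\Omega)$, $f\in L^2(\Omega)$), the pairing $(\mathcal{A}u-f, v)$ is unbounded in $v$ as soon as $\mathcal{A}u - f \neq 0$, by taking $v = t(\mathcal{A}u - f)$ and letting $t\to+\infty$; so the argument is clean here precisely because we work with $u\in H^2$. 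A secondary subtlety is the precise sense of the boundary condition: $\|u-g\|_{L^2(\partial\Omega)} = 0$ must be read through the trace operator $H^2(\Omega)\to H^{3/2}(\partial\Omega)\hookrightarrow L^2(\partial\Omega)$, and matching $g\in H^{3/2}(\partial\Omega)$ in $L^2$ is enough to conclude equality of traces. I would also remark that the existence of a feasible point with zero objective is exactly the content of the assumed well-posedness of \eqref{eq:1}, so no separate existence argument for \eqref{eq:co} beyond invoking that hypothesis is needed here (though the excerpt defers a fuller existence discussion to the next section).
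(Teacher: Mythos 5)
Your proof is correct, but it takes a more direct route than the paper. You collapse the inner supremum immediately — $\sup_{v}(\mathcal{A}u-f,v)=+\infty$ unless $-\Delta u=f$, which is exactly the paper's computation \eqref{eq:ll1} — and thereby reduce \eqref{eq:is} to the constrained problem \eqref{eq:co}, after which both directions follow from nonnegativity of the boundary mismatch and uniqueness of the solution to \eqref{eq:1}. The paper instead routes everything through the saddle-point machinery: Lemma \ref{lem4} (saddle points are equivalent to the strong max--min property \eqref{eq:dual}), the first-order optimality system \eqref{eq:w1}, and Lemma \ref{lm2}, whose proof tests \eqref{eq:w1} against solutions of auxiliary Dirichlet problems. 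For the statement of Theorem \ref{thm1} as written — which only concerns the primal variable $u$ — your argument is shorter and loses nothing. What the paper's longer route buys is the explicit identification of the dual optimum $v^*=0$ and the full optimality system \eqref{eq:w1}; that structure is not decorative, since the error analysis of Theorem \ref{thm2} estimates $\|v_n\|_{L^2(\Omega)}$ against $v=0$ and leans on the saddle-point characterization. So your proof suffices for the equivalence, while the paper's establishes the extra information the rest of the paper consumes.

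One local slip: your parenthetical claim that $(u,v)$ is a saddle point for \emph{any} $v\in L^2(\Omega)$ is false — for $v\neq 0$ the inequality $\mathcal{L}(u,v)\le\mathcal{L}(w,v)$ in \eqref{eq:lsa} fails, since the term $(-\Delta w-f,v)$ can be driven to $-\infty$ over $w\in H^2(\Omega)$; only $(u,0)$ is a saddle point. You half-acknowledge this and then correctly verify that $u$ minimizes $\sup_v\mathcal{L}(\cdot,v)$, which is all the theorem requires, so the slip does not propagate; but the sentence should either be deleted or corrected to $v=0$.
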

In order to prove the above theorem and show the equivalence between the inf-sup optimization problem \eqref{eq:is} and problem \eqref{eq:1},  we utilize the nice property of the saddle points.

\begin{definition}
A pair of functions $(u,v) \in H^2(\Omega)\times L^2(\Omega)$ is called a \emph{saddle point} of the Lagrangian functional $\mathcal L(u,v)$ if the following inequality holds
\begin{align} \label{eq:lsa}
    \mathcal L(u,q) \le \mathcal  L(u,v)\le \mathcal  L(w,v)\quad \text{for any } w\in H^2(\Omega), q \in L^2(\Omega).
\end{align}
\end{definition}

By Lemma \ref{lem4} in the appendix, a pair of function $(u^*,v^*)$ is a saddle point of the Lagrangian $\mathcal L$ if and only if the inf-sup problem \eqref{eq:is} satisfies the \emph{strong max-min property} 
\begin{align} \label{eq:dual}
    \inf_{u\in H^2(\Omega)} \sup_{v\in L^2(\Omega)} \mathcal  L(u,v) = \sup_{v\in L^2(\Omega)} \inf_{u\in H^2(\Omega)} \mathcal  L(u,v),
\end{align}
and $u^*$ and $v^*$  are the corresponding primal and dual optimal solutions. 
Therefore, to show the equivalence between problem \eqref{eq:is} and the PDE problem \eqref{eq:1}, it suffices to show the max-min property, and that the solution to \eqref{eq:1} and the saddle point of $\mathcal L$ are equivalent.

\subsubsection*{The strong max-min property} 
Suppose $u$ is the optimal solution to \eqref{eq:is}, by the linearity of the constraint, we have 
\begin{align} \label{eq:ll1}
    \sup_{v\in L^2(\Omega)} \mathcal L(u,v)  = \left\{\begin{array}{cc}
          +\infty, & \text{if } -\Delta u -f \neq 0, \\
         \frac12 \|u-g\|_{L^2(\partial\Omega)}^2, &\text{otherwise}.
    \end{array} \right.
\end{align}
Since for $f,g$ satisfying the assumption of Theorem \ref{thm1},  \eqref{eq:1} has a unique solution,  $\inf_{u\in H^2(\Omega)}   \sup_{v\in L^2(\Omega)} \mathcal L(u,v) = 0$. On the other hand, 
\begin{align}\label{eq:ll2}
        \inf_{u \in L^2(\Omega)} \mathcal L(u,v)  = \left\{\begin{array}{cc}
          -\infty, & \text{if } v\neq 0, \\
         0, &\text{otherwise},
    \end{array} \right.
\end{align}
since for $v\neq 0$, one can take $-\Delta u - f = - \lambda v $ and $u=g$ on $\partial\Omega$ so that $\mathcal L(u,v)\to -\lambda \|v\|^2$, and let $\lambda \to \infty$ to get $\mathcal L(u,v)\to -\infty$. Therefore,  $$\sup_{u\in H^2(\Omega)}   \inf_{v\in L^2(\Omega)} \mathcal L(u,v) = 0 = \inf_{u\in H^2(\Omega)}   \sup_{v\in L^2(\Omega)} \mathcal L(u,v)$$ and the strong max-min property holds.

\subsubsection*{Equivalence between the saddle point and PDE} 
First we derive the equation of the saddle point. Suppose $(u,v)$ is a saddle point of $\mathcal{L}$, then by definition \eqref{eq:lsa}, $\mathcal L(u,v+\varepsilon q) \le \mathcal L(u,v) \le \mathcal L(u+\varepsilon w,v)$ for any $q \in L^2(\Omega)$, $w\in H^2(\Omega)$ and $\varepsilon \in \mathbb{R}$. By the arbitrariness of $\varepsilon$, one can derive the following equations:  
\begin{equation}\label{eq:w1}
\begin{aligned}
%\begin{align}
    &\int_{\partial\Omega} u w ds + \int_\Omega (-\Delta w) v dx = \int_{\partial\Omega} w g ds,\quad \text{for any } w\in H^2(\Omega), \\ %  \label{eq:w1}\\
    &\int_\Omega (-\Delta u - f) q dx = 0,\quad \text{for any }q\in L^2(\Omega).% \label{eq:w2}
\end{aligned}
 \end{equation}
 Note that the converse is also true. If $(u,v)$ satisfies the above equations, we can compute $\mathcal L(u,q)- \mathcal L(u,v) = \int_\Omega (-\Delta u -f)(q-v)dx$, which by the second equation of \eqref{eq:w1}, equals zero, and a careful calculation shows 
$$
 \mathcal L(w,v) - \mathcal L(u,v) = \frac{1}{2}\|u-w\|_{L^2(\partial\Omega)}^2 \ge 0 \quad \text{for any}\quad w\in H^2(\Omega). 
$$ 
 Therefore, \eqref{eq:lsa} holds.

Next we show that the solution to \eqref{eq:1} and the solution to \eqref{eq:w1} are the same.
% The following lemma gives a rigorous treatment on the above derivation. 
\begin{lemma}\label{lm2}
Let the assumptions of Theorem \ref{thm1} hold. Then $u$ is a solution to \eqref{eq:1} if and only if it is a solution to \eqref{eq:w1}.
\end{lemma}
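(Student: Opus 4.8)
The plan is to establish the two implications separately, using integration by parts (Green's identities) to move derivatives between $u$, $v$, and the test functions $w$, $q$.

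First I would prove that a solution $u$ of \eqref{eq:1} (together with the appropriate $v$) solves \eqref{eq:w1}. Given $u \in H^2(\Omega)$ with $-\Delta u = f$ in $\Omega$ and $u = g$ on $\partial\Omega$, the second equation of \eqref{eq:w1} is immediate since $-\Delta u - f = 0$. For the first equation, I would exhibit the companion multiplier: take $v \equiv 0$. Then the left side becomes $\int_{\partial\Omega} u w\, ds = \int_{\partial\Omega} g w\, ds$ by the boundary condition, which is exactly the right side. Hence $(u, 0)$ solves \eqref{eq:w1}.

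The harder direction is the converse: suppose $(u,v)$ solves \eqref{eq:w1}; I must show $u$ solves \eqref{eq:1}. The second equation of \eqref{eq:w1}, holding for all $q \in L^2(\Omega)$, immediately gives $-\Delta u = f$ in $\Omega$ (in the $L^2$ sense). It remains to extract the boundary condition $u = g$ on $\partial\Omega$ from the first equation. I would apply Green's second identity to the term $\int_\Omega (-\Delta w) v\, dx$. Here a regularity subtlety arises: $v$ is only in $L^2(\Omega)$ a priori, so $\int_\Omega (-\Delta w) v$ cannot be integrated by parts naively. The key observation is that the first equation of \eqref{eq:w1} says precisely that the distributional Laplacian of $v$, tested against arbitrary $w \in H^2(\Omega)$, is controlled — in fact choosing $w \in C_c^\infty(\Omega)$ (so the boundary terms vanish) gives $\int_\Omega (-\Delta w) v\, dx = 0$ for all such $w$, meaning $\Delta v = 0$ in $\Omega$ in the distributional sense; by Weyl's lemma (elliptic regularity), $v$ is smooth and harmonic in $\Omega$. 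With $v$ harmonic (hence as regular as needed interior to $\Omega$), I can make sense of its boundary trace and normal derivative, and Green's identity yields, for all $w \in H^2(\Omega)$,
\begin{align*}
\int_\Omega (-\Delta w) v\, dx = \int_{\partial\Omega}\left( \frac{\partial w}{\partial n} v - w \frac{\partial v}{\partial n}\right) ds.
\end{align*}
Substituting into the first equation of \eqref{eq:w1} gives $\int_{\partial\Omega}\left[ (u - g - \tfrac{\partial v}{\partial n}) w + v \tfrac{\partial w}{\partial n} \right] ds = 0$ for all $w \in H^2(\Omega)$. Since the Dirichlet and Neumann traces of $w$ can be prescribed independently, this forces $v = 0$ on $\partial\Omega$ and $u - g - \frac{\partial v}{\partial n} = 0$ on $\partial\Omega$. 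From $v$ harmonic in $\Omega$ with zero boundary trace, uniqueness for the Dirichlet problem gives $v \equiv 0$, whence $\frac{\partial v}{\partial n} = 0$ and therefore $u = g$ on $\partial\Omega$. Combined with $-\Delta u = f$, this shows $u$ solves \eqref{eq:1}.

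The main obstacle, and the point requiring care, is the low regularity of the Lagrange multiplier $v \in L^2(\Omega)$: one cannot integrate by parts against it without first upgrading its regularity. The resolution is exactly the argument above — testing the first equation of \eqref{eq:w1} against compactly supported $w$ reveals $v$ is distributionally harmonic, so interior elliptic regularity promotes $v$ to a smooth function, after which all the boundary manipulations are justified and the trace theorems (surjectivity of the combined Dirichlet–Neumann trace operator on $H^2(\Omega)$) can be invoked to separate the boundary conditions. A secondary technical point is justifying that the traces of $w$ range over a space rich enough to conclude the pointwise boundary identities; this follows from standard trace theory on the smooth domain $\partial\Omega$.
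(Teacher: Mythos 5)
Your forward direction coincides with the paper's: if $u$ solves \eqref{eq:1} then $(u,0)$ solves \eqref{eq:w1}. For the converse you take a genuinely different route, and it has a gap at exactly the step you yourself flag as the crux. From testing against $w\in C_c^\infty(\Omega)$ you correctly deduce that $v$ is distributionally harmonic, and Weyl's lemma does give interior smoothness; but you then assert that this lets you ``make sense of its boundary trace and normal derivative'' so that Green's second identity can be applied up to $\partial\Omega$. That inference is not valid: interior regularity of a harmonic function says nothing about its boundary behaviour. An $L^2(\Omega)$ harmonic function in general has boundary values only in $H^{-1/2}(\partial\Omega)$ and normal derivative only in $H^{-3/2}(\partial\Omega)$ (these are precisely the very weak solutions of the Dirichlet problem with distributional data); it need not extend continuously to $\overline{\Omega}$, and the surface integrals you write are not literally integrals. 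The argument is repairable: for $v\in L^2(\Omega)$ with $\Delta v\in L^2(\Omega)$ the trace operators $\gamma_0 v\in H^{-1/2}(\partial\Omega)$ and $\gamma_1 v\in H^{-3/2}(\partial\Omega)$ are well defined and Green's formula holds as a duality pairing against $w\in H^2(\Omega)$; your separation step (surjectivity of $w\mapsto(w|_{\partial\Omega},\partial_n w|_{\partial\Omega})$ onto $H^{3/2}\times H^{1/2}$) then gives $\gamma_0 v=0$ and $u-g=\pm\gamma_1 v$ distributionally, whence $v\equiv 0$ and $u=g$. But this trace machinery must be invoked explicitly; Weyl's lemma alone does not supply it. (There is also a harmless sign slip: with $\Delta v=0$, Green's identity gives $\int_\Omega(-\Delta w)v\,dx=\int_{\partial\Omega}\bigl(w\,\partial_n v - v\,\partial_n w\bigr)\,ds$; the conclusion is unaffected.)

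The paper avoids the issue entirely by never integrating by parts against $v$. It reduces to $f=0$, $g=0$ by linearity (so that only uniqueness for \eqref{eq:w1} needs proof) and then chooses special test functions: taking $w$ harmonic with arbitrary boundary data $h\in L^2(\partial\Omega)$ kills the $v$-term and yields $\int_{\partial\Omega}u h\,ds=0$, hence $u=0$ on $\partial\Omega$ and then $u\equiv 0$; taking $w$ with $-\Delta w=r$ and zero boundary data kills the boundary term and yields $\int_\Omega r v\,dx=0$, hence $v=0$. This sidesteps all questions about traces of $v$ and is the more elementary argument; your approach buys an explicit identification of the boundary relations satisfied by $v$, but only at the cost of nontrivial trace theory for the space $\{v\in L^2(\Omega):\Delta v\in L^2(\Omega)\}$.
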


\begin{proof}
    First, by the regularity theory of elliptic equations, there exists a unique solution $u \in H^2(\Omega)$ to \eqref{eq:1} (see for example \cite{grisvard2011elliptic}). It is simple to verify that $(u,v=0)$ is also a solution to \eqref{eq:w1}. Hence it suffices to show the uniqueness of the solution to  \eqref{eq:w1}.
   By the linearity of the problem  \eqref{eq:w1}, we need to show that for $f=0,g=0$, \eqref{eq:w1} only has a zero solution. First, from the second equation of \eqref{eq:w1} with $f=0$, we get 
    \begin{align}\label{eq:p1}
        -\Delta u = 0\quad  \text{in }\Omega,
    \end{align}
    almost everywhere. Let $h$ be an arbitrary function in $L^2(\partial \Omega)$ and $w \in H^2(\Omega)$ be the solution to equation 
    \begin{align*}
        -\Delta w = 0\quad \text{in }\Omega,\quad 
        w = h\quad \text{on } \partial\Omega,
    \end{align*}
    then with  \eqref{eq:w1}, we have
    \begin{align*}
        \int_{\partial\Omega} u h ds =0 \;  \text{for any }h \in L^2(\partial \Omega),
    \end{align*}
    which implies $u=0$ almost everywhere on $\partial\Omega$. This, together with \eqref{eq:p1}, implies $u(x)=0$ for $x\in \Omega$. 

    To show $v=0$, let $r\in L^2(\Omega)$ be an arbitrary function and we take the test function $w$ to be the solution to 
    \begin{align*}
        -\Delta w = r\quad \text{in }\Omega,\quad 
        w = 0 \quad \text{on } \partial\Omega,
    \end{align*}
    then \eqref{eq:w1} becomes 
    \begin{align}
        \int_\Omega r v dx =0\quad \text{for any }r \in L^2(\Omega),
    \end{align}
    which implies $v=0$ almost everywhere.  This finishes the proof.
\end{proof}

\subsubsection*{Proof of Theorem 1.}
If $u$ is a solution to \eqref{eq:1}, then by Lemma \ref{lm2}, $(u,v=0)$ solves \eqref{eq:w1} and thus $(u,v=0)$ is a saddle point of $\mathcal L$. Therefore, it solves \eqref{eq:dual} and thus problem \eqref{eq:is}. Conversely, if $(u,v)$ solves \eqref{eq:is}, then by the strong max-min property, the primal and dual optimal solutions $u$ and $v$ are the saddle point of $\mathcal L$. Hence $(u,v)$ solves \eqref{eq:w1}. And by Lemma \ref{lm2}, $u$ is the solution to \eqref{eq:1}.

\subsection{The existence of optimization problem \eqref{eq:co} and its equivalence to the PDE formulation}

Next, we establish the existence of the optimization problem \eqref{eq:co} and demonstrate that its solution satisfies \eqref{eq:1}. First,  we assert the Brezzi condition: 
\begin{align}\label{eq:br}
    \inf_{v\in L^2(\Omega)} \sup_{u \in H^2(\Omega)} \frac{\int_{\Omega} -\Delta u v dx}{\|u\|_{H^2(\Omega)} \|v\|_{L^2(\Omega)}} \ge \gamma, 
\end{align}
where $\gamma>0$ is a positive constant. Specifically,  for each $v$, we can set $-\Delta u = v$ in $\Omega$ and $u=0$ on $\partial\Omega$ and use the stability estimate $\|u\|_{H^2(\Omega)}\le C \|v\|_{L^2(\Omega)}$ for the Poisson problem. This leads to  inequality (\ref{eq:br}) with $\gamma =1/C$.

\begin{lemma}\label{lm4}
   Let the assumptions of Theorem \ref{thm1} hold. The optimization problem \eqref{eq:co} has a unique solution $u\in H^2(\Omega)$ that solves \eqref{eq:1}.
\end{lemma}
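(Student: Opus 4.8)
The plan is to deduce Lemma \ref{lm4} from the well-posedness of \eqref{eq:1} together with the equivalence established in Theorem \ref{thm1}, rather than from a direct application of the classical mixed (Brezzi) saddle-point theory. The reason for this detour is structural: the objective functional $J(u):=\frac12\|u-g\|_{L^2(\partial\Omega)}^2$ is only \emph{positive semi-definite} in $u$ (it vanishes on every $u$ with zero trace), so it is not coercive on $U$ in the $H^2(\Omega)$-norm, and the direct method of the calculus of variations does not apply to \eqref{eq:co} as written. The Brezzi condition \eqref{eq:br} is used only to control the \emph{constraint}: by the closed-range theorem it makes $-\Delta\colon H^2(\Omega)\to L^2(\Omega)$ surjective, hence $U$ is nonempty for every $f\in L^2(\Omega)$ (this is also immediate from the standing assumption that \eqref{eq:1} is well-posed).

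First I would exhibit the minimizer explicitly. By elliptic regularity, \eqref{eq:1} has a unique solution $u^*\in H^2(\Omega)$; since $-\Delta u^*=f$ in $\Omega$ we have $u^*\in U$, and since $u^*=g$ on $\partial\Omega$ in the trace sense, $J(u^*)=0$. Because $J\ge 0$ on all of $H^2(\Omega)$, $u^*$ is a global minimizer of \eqref{eq:co} and $\inf_{u\in U}J(u)=0$. For uniqueness, let $\tilde u\in U$ be any minimizer; then $J(\tilde u)=0$ forces $\tilde u=g$ on $\partial\Omega$, and combined with the defining constraint $-\Delta\tilde u=f$ in $\Omega$ this says precisely that $\tilde u$ solves \eqref{eq:1}, whence $\tilde u=u^*$ by uniqueness for \eqref{eq:1}. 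This yields simultaneously existence, uniqueness, membership in $H^2(\Omega)$, and the conclusion that the minimizer solves \eqref{eq:1}. Equivalently, one may observe that $\sup_{v\in L^2(\Omega)}\mathcal L(u,v)=J(u)$ for $u\in U$ and $=+\infty$ otherwise, so \eqref{eq:co} and the inf-sup problem \eqref{eq:is} share the same primal solutions, and then invoke Theorem \ref{thm1} directly.

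The step I expect to be the real obstacle is conceptual rather than computational: existence for \eqref{eq:co} \emph{cannot} be obtained by a compactness / lower-semicontinuity argument alone, because $U$ is an unbounded affine subspace of $H^2(\Omega)$ whose direction space is the entire infinite-dimensional space of harmonic $H^2$ functions, and $J$ is not coercive there (already on the disk, $\|w\|_{L^2(\partial\Omega)}$ and $\|w\|_{H^2(\Omega)}$ are not equivalent norms on harmonic $w$, so the classical Brezzi hypothesis of coercivity on the constraint kernel is unavailable in these spaces). The resolution is exactly the one carried out above: take the a priori existence of the PDE solution $u^*$ --- equivalently, the saddle point supplied by Theorem \ref{thm1} --- as the minimizer, so that the degeneracy of $J$ becomes harmless, and let uniqueness follow from uniqueness for \eqref{eq:1}.
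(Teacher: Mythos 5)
Your proof is correct, but it follows a genuinely different route from the paper. The paper sets $V_f=\{u\in H^2(\Omega):-\Delta u=f\}$, shifts by some $u_f\in V_f$ to reduce \eqref{eq:co} to minimizing the quadratic functional $\frac12 a(w,w)-b(w,g-u_f)$ with $a(w_1,w_2)=\int_{\partial\Omega}w_1w_2\,ds$ over the harmonic subspace $V_0$, invokes Lax--Milgram after asserting that $a$ is coercive on $V_0$ (via the claimed estimate $\|w\|_{H^2(\Omega)}\le C\|w\|_{L^2(\partial\Omega)}$ for harmonic $w$), and then uses the Brezzi condition \eqref{eq:br} to manufacture the multiplier $v^*$ so that $(u^*,v^*)$ satisfies \eqref{eq:w1}, concluding via Lemma \ref{lm2}. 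You instead observe that the objective is nonnegative, vanishes at the unique $H^2$ solution $u^*$ of \eqref{eq:1} (which lies in $U$), and that any minimizer must have zero boundary mismatch and hence solve \eqref{eq:1}, giving existence, uniqueness, and the identification in one stroke. Your approach is more elementary and, notably, sidesteps the coercivity step: the inequality $\|w\|_{H^2(\Omega)}\le C\|w\|_{L^2(\partial\Omega)}$ on harmonic $H^2$ functions is exactly the point you flag as failing (e.g.\ $w_n=r^n\cos(n\theta)$ on the disk has bounded boundary $L^2$ norm but unbounded $H^2$ norm), so the paper's Lax--Milgram argument as written has a gap that your argument avoids. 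What the paper's longer route buys is the explicit construction of the dual variable $v^*$ and the verification of the saddle-point system \eqref{eq:w1}, which your argument does not produce --- but the lemma as stated does not require it. Your closing alternative (identifying the primal solutions of \eqref{eq:co} with those of \eqref{eq:is} and citing Theorem \ref{thm1}) is also legitimate and non-circular, since Theorem \ref{thm1} is proved independently of this lemma.
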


\begin{proof}
    Let $V_f:=\{u\in H^2(\Omega): - \Delta u =f\}$ be a  subspace of $H^2(\Omega)$. By the theory of elliptic equations, $V_f$ is not empty for $f\in L^2(\Omega)$. Taking an element $u_f \in V_f$ such that $u_f\neq 0$. Define $w:=u-u_f$ and the problem \eqref{eq:co} becomes 
    \begin{align*}
        \inf_{w\in V_0}\frac12 \int_{\partial \Omega} (u_f + w - g)^2 ds.
    \end{align*}
    Introducing the bilinear form 
    $$
    a(w_1,w_2) = \int_{\partial \Omega} w_1 \cdot w_2 ds\quad \text{and}\quad b(w,g-u_f) = \int_{\partial \Omega} w (g-u_f) ds,
    $$
    the above optimization problem can be rewritten in the form 
    \begin{align}\label{eq:opv}
        \inf_{w\in V_0} \frac12 a(w,w) - b(w,g-u_f).
    \end{align}
    Next we show $a(w_1,w_2)$ is a continuous coercive bilinear form on $V_0$. First, 
    $$a(w_1,w_2) \le \|w_1\|_{L^2(\partial\Omega)}\|w_2\|_{L^2(\partial\Omega)} \le C \|w_1\|_{H^2(\Omega)} \|w_2\|_{H^2(\Omega)}$$ 
    by the trace theorem. Moreover, by the regularity of elliptic equation 
    \begin{align*}
        &-\Delta w= 0, \text{ in }\Omega, \quad w = w|_{\partial\Omega}, \text{ on } \partial\Omega,
    \end{align*}
    we have 
 $
        \|w\|_{H^2(\Omega)} \le C \|w\|_{L^2(\partial\Omega)}. 
    $
    Hence  $a(w_1,w_2)$ is coercive on $V_0$, satisfying  
    \begin{align*}
        a(w,w) \ge \frac{1}{C} \|w\|_{H^2(\Omega)}^2.
    \end{align*}
    Due to the above property of $a(\cdot,\cdot)$ and $V_0$ being a nonempty closed and convex set, we apply the Lax-Milgram theorem to conclude that there exists a unique solution $w^* \in V_0$ to the problem \eqref{eq:opv}. Hence there exists a unique solution $u^*=w^*+u_f \in H^2(\Omega)$ to the optimization problem \eqref{eq:co}. Moreover,  the unique solution $u$ satisfies
    \begin{align*}
        a(w^*,s) = b(s,g-u_f),\quad \text{ for any } s\in V_0.
    \end{align*}
    Next we extend the space of $s$ from $V_0$ to $H^2$. Let $\ell$ be a linear functional defined on $H^2(\Omega)$ by $\ell(s):= a(w^*,s) - b(s,g-u_f)$. By the Brezzi condition \eqref{eq:br}, there exists a unique solution $v^*$ such that (see \cite[Theorem 4.2.1]{boffi2013mixed})
    \begin{align*}
        \ell(s) = -\int_\Omega (-\Delta s) v^* dx\quad \text{ for any } s\in H^2(\Omega).
    \end{align*}
    Using $u^*=w^*+u_f$, we can get 
    \begin{align*}
        a(u^*-u_f,s) - b(s,g-u_f) =- \int_\Omega (-\Delta s) v^* dx,
    \end{align*}
    which is 
    \begin{align*}
        \int_{\pa\Omega} u^* sds + \int_{\Omega}(-\Delta s) v^* dx = \int_{\pa\Omega} gs ds,
    \end{align*}
    and hence $(u^*,v^*)$ satisfies \eqref{eq:w1}. Moreover, since $\Delta w^*=0$ and $-\Delta u_f = f$, $u^*$ also satisfies the second equation of \eqref{eq:w1}. By Lemma \ref{lm2}, $u^*$ is a solution to equation \eqref{eq:1}.

\end{proof}

\section{Error analysis}
In this section, we analyze the generalization error of our method.
Let $(u_n,v_n)$ represent the solution obtained from InfSupNet after $n$ steps training and let $u$ be the precise solution to equation \eqref{eq:1}. We will derive an error estimate for the discrepancy  between $(u_n,v_n)$ and $(u,v)$, where $v=0$. 

Before conducting the error analysis, let's revisit a well-established result in the literature (refer to \cite{schechter1963lp}). Such an inequality has previously played a crucial role in the error analysis of PINNs for elliptics PDEs \cite{shin2023error,zeinhofer2023unified}).
\begin{lemma}\label{lm:l2elliptic} 
    %Let $s\in\mathbb{R}$ be any real number. 
    Let $\Omega \subset\mathbb R^d$ be a bounded domain with Lipschitz boundary $\partial\Omega$. For any $w \in H^2(\Omega)$, there exists a constant $C>0$ depending only on $\Omega$ such that 
\begin{align*}%\label{eq:stability}
    \|w\|_{H^{1/2}(\Omega)} \le C (\|\Delta w\|_{L^2(\Omega)} + \|w\|_{L^2(\partial\Omega)}).
        % \|w\|_{H^{s}(\Omega)} \le C(\|\Delta w\|_{H^{s-2}(\Omega)} + \|w\|_{H^{s-1/2}(\partial\Omega)}+\|w\|_{H^{s-2}(\Omega)}) .
    \end{align*}
    % and for any set $\Delta \subset\subset \Omega$ compactly supported,
    % \begin{align*}
    %     \|w\|_{H^{1/2}(\Delta)} \le C(\|\Delta w\|_{L^2(\Omega)} + \|w\|_{L^2(\partial\Omega)}).
    % \end{align*}
\end{lemma}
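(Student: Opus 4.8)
The plan is to establish this inequality via a compactness–contradiction argument of Peetre–Tartar type, which is standard for such \emph{a priori} estimates. The key observation is that the right-hand side defines a seminorm on $H^2(\Omega)$, and we want to control the weaker $H^{1/2}(\Omega)$ norm by it. First I would note that the map $w \mapsto (\Delta w, w|_{\partial\Omega})$ is bounded from $H^2(\Omega)$ into $L^2(\Omega) \times L^2(\partial\Omega)$ by the trace theorem, so the inequality is really asserting that this map, together with the compact embedding $H^2(\Omega) \hookrightarrow H^{1/2}(\Omega)$, has closed range in the appropriate sense.

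The main steps: suppose the inequality fails. Then there is a sequence $w_n \in H^2(\Omega)$ with $\|w_n\|_{H^{1/2}(\Omega)} = 1$ but $\|\Delta w_n\|_{L^2(\Omega)} + \|w_n\|_{L^2(\partial\Omega)} \to 0$. The difficulty is that the normalization is in $H^{1/2}$ while the smallness is in terms of $\Delta w_n$, so I first need an auxiliary bound showing $\|w_n\|_{H^2(\Omega)}$ stays bounded. This follows from elliptic regularity: writing $f_n = -\Delta w_n$ and $g_n = w_n|_{\partial\Omega}$, the solution of the Dirichlet problem with this data satisfies $\|w_n\|_{H^2(\Omega)} \le C(\|f_n\|_{L^2(\Omega)} + \|g_n\|_{H^{3/2}(\partial\Omega)})$ — but here I only have $g_n$ small in $L^2(\partial\Omega)$, not $H^{3/2}$, so I cannot directly conclude $H^2$ smallness. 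Instead I would use the $H^{1/2}$ normalization: interpolation or a direct elliptic estimate gives $\|w_n\|_{H^2(\Omega)} \le C(\|\Delta w_n\|_{L^2(\Omega)} + \|w_n\|_{H^{1/2}(\Omega)})$ (this is the standard ``full norm'' elliptic estimate where the lower-order term can be taken in any weaker norm), so the sequence $(w_n)$ is bounded in $H^2(\Omega)$. By the compact embedding $H^2(\Omega) \hookrightarrow\hookrightarrow H^{1/2}(\Omega)$ (Rellich–Kondrachov), a subsequence converges strongly in $H^{1/2}(\Omega)$ to some $w$ with $\|w\|_{H^{1/2}(\Omega)} = 1$, and weakly in $H^2(\Omega)$. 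Then $\Delta w = 0$ in $\Omega$ (weak limit of $\Delta w_n \to 0$) and $w|_{\partial\Omega} = 0$ (the trace is continuous on $H^2$, and $\|w_n\|_{L^2(\partial\Omega)} \to 0$). By uniqueness for the homogeneous Dirichlet problem, $w = 0$, contradicting $\|w\|_{H^{1/2}(\Omega)} = 1$.

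I expect the main obstacle to be the auxiliary bound $\|w_n\|_{H^2(\Omega)} \le C(\|\Delta w_n\|_{L^2(\Omega)} + \|w_n\|_{H^{1/2}(\Omega)})$ — one must be careful that this is legitimate, since the usual elliptic regularity estimate for the Dirichlet problem wants boundary data in $H^{3/2}(\partial\Omega)$, not merely the function small in $H^{1/2}(\Omega)$. The correct statement, found in Schechter's work and Agmon–Douglis–Nirenberg $L^p$ theory, is the interior-plus-boundary estimate $\|w\|_{H^2(\Omega)} \le C(\|\Delta w\|_{L^2(\Omega)} + \|w\|_{L^2(\partial\Omega)} + \|w\|_{L^2(\Omega)})$, which already contains what we want once we absorb the $\|w\|_{L^2(\Omega)} \le \|w\|_{H^{1/2}(\Omega)}$ term; in fact this is essentially a strengthened form of the very inequality we are proving, so it may be cleanest to simply invoke the cited reference \cite{schechter1963lp} for the $H^2$ a priori bound and then run the compactness argument to upgrade to the sharp constant. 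Alternatively, since the paper only needs the stated inequality and cites \cite{schechter1963lp}, one may reasonably present this as: the estimate is a consequence of the $L^p$-theory for elliptic boundary value problems in \cite{schechter1963lp} combined with the compact Sobolev embedding, and give the contradiction argument above as the mechanism by which the lower-order term is reduced to the boundary and Laplacian norms alone.
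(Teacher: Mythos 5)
There is a genuine gap, and it sits exactly where you predicted: the auxiliary $H^2$ bound. The estimate $\|w\|_{H^2(\Omega)} \le C(\|\Delta w\|_{L^2(\Omega)} + \|w\|_{H^{1/2}(\Omega)})$ is false, and so is the variant you fall back on, $\|w\|_{H^2(\Omega)} \le C(\|\Delta w\|_{L^2(\Omega)} + \|w\|_{L^2(\partial\Omega)} + \|w\|_{L^2(\Omega)})$. On the unit disk take $w_n(r,\theta)=r^n\cos(n\theta)$: these are harmonic, $\|w_n\|_{L^2(\partial\Omega)}\sim 1$, $\|w_n\|_{H^{1/2}(\Omega)}\sim 1$, $\|w_n\|_{L^2(\Omega)}\sim n^{-1/2}$, yet $\|w_n\|_{H^2(\Omega)}\sim n^{3/2}\to\infty$. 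The Agmon--Douglis--Nirenberg/Schechter a priori estimate at level $s=2$ requires the boundary trace in $H^{3/2}(\partial\Omega)$ — it is only the \emph{interior} lower-order term that can be weakened, never the boundary norm — and replacing $H^{3/2}(\partial\Omega)$ by $L^2(\partial\Omega)$ loses exactly the $3/2$ derivatives your counterexample exploits. Without $H^2$ boundedness of the contradiction sequence, the Rellich step (strong convergence of a subsequence in $H^{1/2}(\Omega)$) has nothing to stand on, so the argument as written does not close.

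The fix is to run the elliptic a priori estimate at the fractional level matched to the data you actually control, which is what the paper does: Schechter's estimate $\|w\|_{H^{s}(\Omega)} \le C(\|\Delta w\|_{H^{s-2}(\Omega)} + \|w\|_{H^{s-1/2}(\partial\Omega)}+\|w\|_{H^{s-2}(\Omega)})$ with $s=1/2$, so that the boundary norm becomes $H^{0}(\partial\Omega)=L^2(\partial\Omega)$ and only $H^{1/2}(\Omega)$ interior control is claimed. This leaves a lower-order term $\|w\|_{L^2(\Omega)}$, which the paper removes not by compactness but by citing a very-weak-solution (transposition) estimate $\|w\|_{L^2(\Omega)} \le C (\|\Delta w\|_{H^{-2}(\Omega)} + \|w\|_{H^{-1/2}(\partial\Omega)})$ from May--Rannacher--Vexler. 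If you want to keep your Peetre--Tartar structure, you can: normalize in $H^{1/2}(\Omega)$, use the $s=1/2$ Schechter bound to see that $\|w_n\|_{L^2(\Omega)}$ stays bounded away from zero, extract a subsequence converging strongly in $L^2(\Omega)$ via the compact embedding $H^{1/2}(\Omega)\hookrightarrow L^2(\Omega)$, and identify the limit as zero by testing against $-\Delta\phi$ for $\phi$ solving the homogeneous Dirichlet problem and using Green's identity (this duality computation is legitimate because $\partial_n\phi\in H^{1/2}(\partial\Omega)$ pairs with $w_n|_{\partial\Omega}\to 0$ in $L^2(\partial\Omega)$, and is essentially a proof of the May et al.\ lemma). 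But the compactness must happen in $L^2(\Omega)$, not in $H^{1/2}(\Omega)$ from an $H^2$ bound that is not available.
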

\begin{proof} Using the elliptic regularity inequality given in \cite[Theorem 2.1]{schechter1963lp}), we have 
\begin{align*}
    \|w\|_{H^{s}(\Omega)} \le C(\|\Delta w\|_{H^{s-2}(\Omega)} + \|w\|_{H^{s-1/2}(\partial\Omega)}+\|w\|_{H^{s-2}(\Omega)}),
\end{align*}
for any $s$ real. Taking $s=1/2$ and using the embedding $  \|v\|_{H^{-\alpha}(\Omega)} \le C\|v\|_{L^2(\Omega)}$ for $\alpha \ge 0$, the above inequality implies 
\begin{align*}
    \|w\|_{H^{1/2}(\Omega)} \le C(\|\Delta w\|_{H^{-3/2}(\Omega)} + \|w\|_{L^{2}(\partial\Omega)} + \|w\|_{H^{-3/2}(\Omega}) \le C (\|\Delta w\|_{L^2(\Omega)} + \|w\|_{L^{2}(\partial\Omega)}+ \|w\|_{L^2(\Omega)} ).
\end{align*}
Furthermore, applying Lemma 2.1  from \cite{may2013error} implies  
\begin{align*}
    \|w\|_{L^2(\Omega)} \le C (\|\Delta w\|_{H^{-2}(\Omega)} + \|w\|_{H^{-1/2}(\partial\Omega)}) \le C (\|\Delta w\|_{L^2(\Omega)} + \|w\|_{L^2(\partial\Omega)}).
\end{align*}
Combining these inequalities yield \eqref{eq:stability} as asserted. 
%and prove the lemma.
\end{proof}
% By the above lemma, to get bounds on the difference between $(u_n,v_n)$ and $(u,v=0)$, it suffices to derive a bound on the terms 
% % \begin{align*}
% $\|\Delta (u_n-u)\|_{L^2(\Omega)} +  \|u_n-u\|_{L^2(\partial\Omega)}$. 
% \end{align*}
% The above lemma implies that $\|\Delta w \|_{L^2(\Omega)} + \|w\|_{L^2(\partial\Omega)}$ can be used as a norm of $w$ on $H^2(\Omega)$. 
% By Lemma \ref{lm:l2elliptic}, we have 
Applying Lemma \ref{lm:l2elliptic} to $u_n-u$ gives
\begin{align}\label{eq:stability}
    \|u_n-u\|_{H^{1/2}(\Omega)} %&\le C \|-\Delta (u-u_n)\|_{H^{-2}(\Omega)} + C \|u_n-u\|_{H^{-\frac12}(\partial\Omega)} \nonumber\\
    &\le C ( \|\Delta (u_n-u)\|_{L^2(\Omega)} +  \|u_n-u\|_{L^2(\partial\Omega)}).
\end{align}
Hence to get the error estimate, it suffices to derive a bound on the terms on the right hand side of \eqref{eq:stability}.
\subsection{Error decomposition} 
We will break down the error into various components, encompassing the Monte Carlo sampling error, the training loss gap, and the neural network approximation error. The principal outcome is articulated in the following theorem.
\begin{theorem}\label{thm2}
    Let the assumptions of Theorem \ref{thm1} hold. Then the following estimate holds:
    \begin{align}\label{eq:est}
    \|u_n - u\|_{L^2(\partial \Omega)}^2 + \|\Delta (u_n -u) \|_{L^2(\Omega)}^2 
    % \|u_n - u\|_{L^2(\Omega)}^2 
    + \|v_n\|_{L^2(\Omega)}^2 \le I_{NN} + I_{MC} + I_{GP}, 
    \end{align}
    where $I_{NN}$ is the neural network approximation error, given by 
    \begin{align}\label{eq:enn}
        I_{NN} = |\L(u_n,q_{\tilde v}) - \L(u_n,\tilde v)| + |\L(w_{\tilde u},v_n) - \L(\tilde u,v_n)|,
    \end{align}
    where $\tilde v = -\Delta u_n -f$, and $\tilde u$ is the solution to
    \begin{equation}\label{eq:tildeu}
    \begin{aligned} 
        -\Delta \tilde u &= f - v_n,\quad \text{in }\Omega, \\
        \tilde u &=g,\quad \text{on }\partial\Omega.
    \end{aligned}
    \end{equation}
        Here $q_{\tilde v},w_{\tilde u}$ are the best approximation of $\tilde v,\tilde u$ in $\mathcal V_\tau, \mathcal U_\theta$ (see \eqref{eq:qv} and \eqref{eq:wu} for the definitions),
    and $I_{MC}$ is the Mento-Carlo sampling error defined by 
    \begin{align}\label{eq:emc}
        I_{MC} = |\L(u_n,q_{\tilde v}) - \hat{\L}(u_n,q_{\tilde v})| + |\L(w_{\tilde u},v_n) - \hat \L (w_{\tilde u},v_n)|,
    \end{align} 
    and $I_{GP}$ is the duality gap of the training loss 
    \begin{align}\label{eq:ils}
        I_{GP} =\sup_{q\in \mathcal V_\tau} \hat{\L}(u_n,q) - \inf_{w\in \mathcal U_\theta} \hat \L(w,v_n).
    \end{align}
    Moreover, by %Lemma \ref{lm:l2elliptic},
     \eqref{eq:stability}, 
    inequality \eqref{eq:est} implies 
    \begin{align}\label{eq:uuvv}
        \|u_n-u\|_{H^{1/2}(\Omega)}^2 + \|v_n\|_{L^2(\Omega)}^2 \le C (I_{NN}+I_{MC}+I_{GP}).
    \end{align}
\end{theorem}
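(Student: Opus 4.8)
The entire estimate reduces to controlling one scalar quantity, the \emph{Lagrangian gap}
\[
\Phi := \mathcal{L}(u_n,\tilde v) - \mathcal{L}(\tilde u,v_n),
\]
evaluated at the two auxiliary functions $\tilde v = -\Delta u_n - f$ and $\tilde u$ (the solution of \eqref{eq:tildeu}). The plan has two halves: first, show that $\Phi$ \emph{is} the left-hand side of \eqref{eq:est} (up to halving the boundary term), so that it controls it; second, show that $\Phi \le I_{NN}+I_{MC}+I_{GP}$ by telescoping through the network best-approximants $q_{\tilde v}\in\mathcal V_\tau$ of $\tilde v$ and $w_{\tilde u}\in\mathcal U_\theta$ of $\tilde u$.

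For the first half I would just substitute into the definition of $\mathcal{L}$. Since $-\Delta u = f$ in $\Omega$ and $u=g$ on $\partial\Omega$, one has $-\Delta u_n - f = -\Delta(u_n-u)$ and $u_n - g = u_n - u$ on $\partial\Omega$, so the choice $\tilde v = -\Delta u_n - f$ gives
\[
\mathcal{L}(u_n,\tilde v) = \tfrac12\|u_n-u\|_{L^2(\partial\Omega)}^2 + \|\Delta(u_n-u)\|_{L^2(\Omega)}^2 .
\]
On the other hand $\tilde u$ satisfies $-\Delta\tilde u - f = -v_n$ in $\Omega$ and $\tilde u - g = 0$ on $\partial\Omega$, so its boundary term vanishes and $\mathcal{L}(\tilde u,v_n) = (-v_n,v_n) = -\|v_n\|_{L^2(\Omega)}^2$. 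Subtracting,
\[
\Phi = \tfrac12\|u_n-u\|_{L^2(\partial\Omega)}^2 + \|\Delta(u_n-u)\|_{L^2(\Omega)}^2 + \|v_n\|_{L^2(\Omega)}^2 ,
\]
which dominates a fixed positive multiple of the left side of \eqref{eq:est}.

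For the second half I would telescope $\Phi$ through the approximants and their Monte-Carlo evaluations:
\begin{align*}
\Phi = &\ \bigl[\mathcal{L}(u_n,\tilde v) - \mathcal{L}(u_n,q_{\tilde v})\bigr] + \bigl[\mathcal{L}(u_n,q_{\tilde v}) - \hat{\mathcal{L}}(u_n,q_{\tilde v})\bigr] + \bigl[\hat{\mathcal{L}}(u_n,q_{\tilde v}) - \hat{\mathcal{L}}(w_{\tilde u},v_n)\bigr] \\
&\ + \bigl[\hat{\mathcal{L}}(w_{\tilde u},v_n) - \mathcal{L}(w_{\tilde u},v_n)\bigr] + \bigl[\mathcal{L}(w_{\tilde u},v_n) - \mathcal{L}(\tilde u,v_n)\bigr].
\end{align*}
The first and last brackets sum, by \eqref{eq:enn}, to at most $I_{NN}$; the second and fourth, by \eqref{eq:emc}, to at most $I_{MC}$; and the middle bracket is at most $\sup_{q\in\mathcal V_\tau}\hat{\mathcal{L}}(u_n,q) - \inf_{w\in\mathcal U_\theta}\hat{\mathcal{L}}(w,v_n) = I_{GP}$, precisely because $q_{\tilde v}\in\mathcal V_\tau$ and $w_{\tilde u}\in\mathcal U_\theta$. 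Combining the two halves gives \eqref{eq:est} (modulo the fixed constant from the $\tfrac12$, which is absorbed into the right-hand side); feeding \eqref{eq:est} into the elliptic stability bound \eqref{eq:stability}, and using that $\|v_n\|_{L^2(\Omega)}^2$ is itself controlled by the left side of \eqref{eq:est}, then yields \eqref{eq:uuvv}.

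The only point requiring care is the bookkeeping in the first half: one must check that $\tilde u$ is a well-defined element of $H^2(\Omega)$, which holds because the datum $f - v_n$ of \eqref{eq:tildeu} lies in $L^2(\Omega)$ ($v_n\in\mathcal V_\tau$ being the realization of a smooth network), and one must verify the two exact identities for $\mathcal{L}(u_n,\tilde v)$ and $\mathcal{L}(\tilde u,v_n)$. Everything else is mechanical: the telescoping is an exact algebraic rearrangement, and matching each bracket to its named error uses only the triangle inequality and the memberships $q_{\tilde v}\in\mathcal V_\tau$, $w_{\tilde u}\in\mathcal U_\theta$. I do not expect a genuine obstacle here; the substance of the theorem lies entirely in the clever choice of $\tilde v$ and $\tilde u$ that makes $\Phi$ collapse exactly onto the three error quantities to be estimated.
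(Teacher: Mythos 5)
Your proposal is correct and is essentially the paper's own argument: the paper bounds $\mathcal{L}(u_n,\tilde v)$ from above and $\mathcal{L}(\tilde u,v_n)$ from below by passing through the same intermediaries $q_{\tilde v}$, $w_{\tilde u}$ and $\hat{\mathcal{L}}$, and your single telescoping identity for $\Phi=\mathcal{L}(u_n,\tilde v)-\mathcal{L}(\tilde u,v_n)$ is just those two chains added together. Your explicit handling of the factor $\tfrac12$ on the boundary term is if anything slightly more careful than the paper, which silently drops it between its intermediate estimate and the stated inequality \eqref{eq:est}.
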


\begin{proof}
First we estimate $u_n-u$. For $u_n \in \mathcal U_\theta \subset H^2(\Omega)$, we define  $\tilde v = -\Delta u_n - f$ so that
$$ \|u_n - u\|_{L^2(\partial \Omega)}^2 + \|\Delta (u_n -u) \|_{L^2(\Omega)}^2  \le 2 \L(u_n,\tilde v).$$
Let $q_{\tilde v}$ be the best approximation of $\tilde v \in \mathcal V_\tau$ such that 
\begin{align}\label{eq:qv}
q_{\tilde v} = \arg\inf_{q\in \mathcal V_\tau} \|q-\tilde v\|_{L^2(\Omega)},
\end{align}
then 
\begin{align}\label{eq:er1}
    \L(u_n,\tilde v) &= \L(u_n,\tilde v) - \L(u_n, q_{\tilde v}) + \L(u_n, q_{\tilde v})\le |\L(u_n,\tilde v) - \L(u_n, q_{\tilde v})| + \L(u_n, q_{\tilde v}) .
\end{align}

The first term on the right side gives the first part of the approximation error \eqref{eq:enn}. Next we consider the second term and rewrite 
 $\L(u_n,q_{\tilde v})$  using $\hat \L$, which is defined in \eqref{eq:hatLL}, as
\begin{align*}
    \L(u_n,q_{\tilde v}) &= \L(u_n,q_{\tilde v}) - \hat \L(u_n,q_{\tilde v}) + \hat \L(u_n,q_{\tilde v}) \le |\L(u_n,q_{\tilde v}) - \hat \L(u_n,q_{\tilde v}) | + \sup_{q\in \mathcal V_\tau} \hat \L(u_n, q).
\end{align*}
The first term on the right of the last inequality gives the first part of the Mento-Carlo sampling error \eqref{eq:emc}, and the last term gives the first part of the duality gap \eqref{eq:ils}.
% and the last term relates to the updated empirical loss. 
Combing the above inequality with \eqref{eq:er1}, we obtain
\begin{align}\label{eq:uest}
    \frac12 \|&u_n -u\|_{L^2(\partial\Omega)}^2 + \|\Delta (u_n -u)\|_{L^2(\Omega)}^2 \nonumber\\
    &\le |\L (u_n,\tilde v) - \L(u_n,q_{\tilde v})| +  |\L(u_n,q_{\tilde v}) - \hat \L(u_n,q_{\tilde v}) | + \sup_{q\in \mathcal V_\tau} \hat \L(u_n, q).
\end{align}

Next we estimate the difference between $v_n$ and $v=0$.
For any $v_n \in \mathcal V_\tau \subset L^2(\Omega)$, there exists a solution $\tilde u \in H^2(\Omega)$ to \eqref{eq:tildeu}.
Taking $u=\tilde u$ in $\L(u,v_n)$, we get 
\begin{align*}
    \L(\tilde u, v_n ) = - \|v_n\|_{L^2(\Omega)}^2.
\end{align*}

Let $w_{\tilde u}$ be the best approximation of $\tilde u$ in $\mathcal U_\theta$, i.e.
\begin{align}\label{eq:wu}
    w_{\tilde u} = \arg\inf_{w \in \mathcal U_\theta} \|w-\tilde u\|_{H^2(\Omega)},
    % w_{\tilde u} = \inf_{w \in \mathcal U_\theta} |\L(\tilde u,v_n) - \L(w,v_n)|,
\end{align}
then 
\begin{align}\label{eq:erv1}
    \L(\tilde u, v_n) &= \L(\tilde u, v_n) - \L(w_{\tilde u},v_n) + \L(w_{\tilde u},v_n) \ge -|\L(\tilde u, v_n) - \L(w_{\tilde u},v_n)| + \L(w_{\tilde u},v_n).
\end{align}

The first term on the right reflects the approximation error of the neural network. 
For the second term, we have% Next we consider the second term and decompose it using $\hat \L$ as 
\begin{align*}
\L(w_{\tilde u},v_n) &= \L(w_{\tilde u},v_n) - \hat \L (w_{\tilde u},v_n) + \hat \L(w_{\tilde u},v_n)\ge - |\L(w_{\tilde u},v_n) - \hat \L (w_{\tilde u},v_n)| + \inf_{w \in \mathcal U_\theta} \L(w,v_n).
% \L(w,v_n) = \L(w,v_n) - \hat \L(w,v_n) + \hat \L(w,v_n).
\end{align*}
Combing the above inequality with \eqref{eq:erv1},
% and \eqref{eq:erv2}
we get the estimate 
\begin{align}\label{eq:vest}
    \|v_n\|_{L^2(\Omega)}^2 &\le |\L(\tilde u, v_n) - \L(w_{\tilde u},v_n)| +  |\L(w_{\tilde u},v_n) - \hat \L (w_{\tilde u},v_n)|
    - \inf_{w\in \mathcal U_\theta} \hat \L(w,v_n).
\end{align}
Combing \eqref{eq:uest} and \eqref{eq:vest} leads to \eqref{eq:est} and finishes the proof.
\end{proof}

\subsection{Error bounds} 
To specify, we choose the hyperbolic tangent (tanh) as the activation function for both the $\mathcal V_\tau$
 and $\mathcal U_\theta$ networks.  Initially, our focus is on the neural network approximation error $I_{NN}$ as defined in \eqref{eq:enn}. This error is connected to the approximation inaccuracies of neural networks within the function spaces $L^2(\Omega)$ and $H^2(\Omega)$.

For the first term in \eqref{eq:enn}, we have
\begin{align*}
    |\L(u_n,\tilde v) - \L(u_n,q_{\tilde v})| &= \int_\Omega (-\Delta u_n -f) \cdot (\tilde v-q_{\tilde v}) dx \nonumber\\
    &\le \int_\Omega \tilde v \cdot (\tilde v-q_{\tilde v}) dx \nonumber\\
    &\le \|\tilde v\|_{L^2(\Omega)} \|\tilde v - q_{\tilde v}\|_{L^2(\Omega)}. 
\end{align*}
From Proposition \ref{lm:mishra} (see in Appendix), there exists $\mathcal V_\tau$ (two hidden layers with width given by the proposition) such that 
\begin{align}\label{eq:tilevest}
     \|\tilde v - q_{\tilde v}\|_{L^2(\Omega)} &=\inf_{q\in \mathcal{V}_\tau} \|q - \tilde v\|_{L^2(\Omega)} \le |\Omega|^{\frac12} \inf_{q\in\mathcal V_\tau} \|q-\tilde v\|_{L^\infty(\Omega)} \le |\Omega|^\frac12 C_1 M^{-s+2} \|\tilde v \|_{W^{s-2,\infty}(\Omega)}, %=|\Omega|^\frac12 C_1 M^{-s+2} \|\Delta u_n +f  \|_{W^{s-2,\infty}(\Omega)} ,
\end{align}
for any $s>2$,
where $C_1>0$ is the constant specified in Proposition \ref{lm:mishra} and depends only on $d,s,|\Omega|$, and $M$ is a constant depending on the width of the network.  Here we use the inequality $\|\tilde v\|_{L^2(\Omega)}\le |\Omega|^\frac12 \|\tilde v\|_{L^\infty(\Omega)}$. Hence 
\begin{align}\label{eq:in1}
 |\L(u_n,\tilde v) - \L(u_n,q_{\tilde v})| \le  C_1|\Omega| M^{-s+2} \|\tilde v\|_{W^{s-2,\infty}(\Omega)}^2.%\|\Delta u_n + f\|_{W^{s-2,\infty}(\Omega)}^2.
\end{align}
If $\mbox{ReLU}$ were taken as the activation function for $\mathcal V_\tau$, one could use the results from \cite{guhring2020error,siegel2022optimal} to derive a similar estimate.

The second term in \eqref{eq:enn} when using equation \eqref{eq:tildeu}  is bounded by 
\begin{align}\label{eq:llw}
    % \inf_{w\in \mathcal U_\theta} & |\L(w,v_n) - \L(\tilde u,v_n)| \nonumber\\
    |\L(w_{\tilde u}&,v_n) - \L(\tilde u,v_n)| \nonumber\\
    % &= \frac12 \|w_{\tilde u}-g\|_{L^2(\partial\Omega}^2 - \frac12 \|\tilde u-g\|_{L^2(\partial\Omega)}^2 + \int_\Omega (-\Delta w_{\tilde u} + \Delta \tilde u) v_n dx\nonumber\\
    &=  \left|\frac12 \|w_{\tilde u}-g\|_{L^2(\partial\Omega)}^2  + \int_\Omega (-\Delta w_{\tilde u} + \Delta \tilde u) v_n dx\right| \nonumber \\
    &\le \frac12 \|w_{\tilde u}-\tilde u\|_{L^2(\partial\Omega)}^2  + \|v_n\|_{L^2(\Omega)} \|\Delta (w_{\tilde u}-\tilde u)\|_{L^2(\Omega)}\nonumber\\
    &\le C(\| w_{\tilde u}-\tilde u\|_{H^2(\Omega)}^2 + \|v_n\|_{L^2(\Omega)} \|w_{\tilde u} - \tilde u\|_{H^2(\Omega)}).
\end{align}
Here $C>0$ is the constant induced in the use of the embedding  $\|h\|_{L^2(\Omega)}, \|\Delta h\|_{L^2(\Omega)} \le C \|h\|_{H^2(\Omega)}$.
By Proposition \ref{lm:mishra}, there exists $\mathcal U_\theta$ (with two hidden layers whose size given by the proposition) such that 
\begin{align}\label{eq:tilewest}
\|w_{\tilde u} - \tilde u\|_{H^2(\Omega)} &=
    \inf_{w\in \mathcal U_\theta} \|w-\tilde u\|_{H^2(\Omega)} \le |\Omega|^\frac12 \inf_{w\in \mathcal U_\theta} \|w-\tilde u\|_{W^{2,\infty}(\Omega)} \nonumber\\
    &\le C_2 |\Omega|^\frac12  \max\{R_2^2,\log^2(\beta M^{s+d+2})\}
      M^{-s+2}\|\tilde u\|_{W^{s,\infty}(\Omega)},
\end{align}
where %$C_2(M,\tilde u)>0$ 
$C_2$ is a positive constant depending on $d,s,|\Omega|$, $R_2>1$ is a constant such that $ \tanh' x, \tanh ''x$ are decreasing on $[R_2,\infty)$ and $\beta$ is a constant depending on $d,k,\tilde u$ and is given by \eqref{eq:beta}. Since both functions are decreasing on $[1,\infty)$, we can take $R_2=1$. 
From \eqref{eq:tildeu}, it follows that                
\begin{align*}
    -\Delta (u-\tilde u) = v_n,\quad \text{in }\Omega, \quad 
    u-\tilde u = 0,\quad \text{on }\partial\Omega,
\end{align*}
which by the regularity theory of this elliptic problem yields the following estimate 
\begin{align*}
\|u-\tilde u\|_{W^{s,\infty}(\Omega)} \le C\|v_n\|_{W^{s-2,\infty}(\Omega)}.
\end{align*}
Hence 
\begin{align*}
    \|\tilde u\|_{W^{s,\infty}(\Omega)} \le \|\tilde u - u\|_{W^{s,\infty}(\Omega)} + \|u\|_{W^{s,\infty}(\Omega)} \le C\|v_n\|_{W^{s-2,\infty}(\Omega)} + \|u\|_{W^{s,\infty}(\Omega)} .
\end{align*}
Therefore, \eqref{eq:llw} implies 
\begin{align*}
    |\L(w_{\tilde u},&v_n) - \L(\tilde u,v_n)| \nonumber\\
    \le~& C C_2^2 |\Omega|  \max\{1,\log^4(\beta M^{s+d+2})\} M^{-2s+4}\|\tilde u\|_{W^{s,\infty}(\Omega)}^2 \nonumber\\
    &~+ CC_2 |\Omega|^\frac12  \max\{1,\log^2(\beta M^{s+d+2})\}
      M^{-s+2}\|\tilde u\|_{W^{s,\infty}} \|v_n\|_{L^2(\Omega)}\nonumber\\ 
    \le~& C C_2^2 |\Omega|  \max\{1,\log^4(\beta M^{s+d+2})\} M^{-2s+4}\|\tilde u\|_{W^{s,\infty}}^2 \nonumber\\
    &~+ CC_2 |\Omega|^\frac12  \max\{1,\log^2(\beta M^{s+d+2})\}
      M^{-s+2}(\|\tilde u\|_{W^{s,\infty}}^2 + \|v_n\|_{L^2(\Omega)}^2)\nonumber\\ 
    \le~& C(C_2 |\Omega|^\frac12 \max\{1,\log^2(\beta M^{s+d+2})\} M^{-s+2}+ 
    C_2^2 |\Omega| \max\{1, \log^4(\beta M^{s+d+2})\} M^{-2s+4}) \nonumber\\
    &\cdot (\|v_n\|_{W^{s-2,\infty}(\Omega)}^2 + \|u\|_{W^{s,\infty}(\Omega)}^2).
\end{align*}
Combing the above inequality with \eqref{eq:in1}, we get 
\begin{align}\label{eq:innp}
    I_{NN} \le C M^{-s+2} + C \max\{1,\log^4(\beta M^{s+d+2})\}M^{-2s+4},
\end{align}
where $C>0$ is a constant depending on $d,s,|\Omega|$, and $\|u\|_{W^{s,\infty}(\Omega)}, \|u_n\|_{W^{s,\infty}(\Omega)}$ and $\|v_n\|_{W^{s-2,\infty}(\Omega)}$.  Considering the fact $\lim_{M\to\infty} M^{-s+2}\log^4 (\beta M^{s+d+2}) =0$, we can set 
\begin{align}\label{eq:M0}
  M_0=\min \{M|  \log^4 (\beta M^{s+d+2}) M^{-s+2} \leq 1\},
\end{align}
so that \eqref{eq:innp} implies for $M>M_0$,
\begin{align}
    I_{NN} \le 2C M^{-s+2}.
\end{align}
Next we derive the estimate for the Monte-Carlo sampling error $I_{MC}$ defined by \eqref{eq:emc}. By \cite{caflisch1998monte},   we can get that for $N_b$ and $N$ large,
\begin{align}
    \left|\frac12 \int_{\partial\Omega} (  u_n -g)^2 ds -  \frac{|\partial\Omega|}{2N_b}\sum_{i=1}^{N_b} \left[( u_{n}(x_b^i)-g(x_b^i))^2\right]\right| % &\approx 
    &\le \frac{\sigma_{1}}{2\sqrt{N_b}} |\partial\Omega| \nu_{d-1},
    \nonumber\\
    \left|\int_\Omega ((-\Delta u_n-f) q_{\tilde v}) dx - \frac{|\Omega|}{N} \sum_{i=1}^N [(-\Delta u_n(x^i) - f(x_i)) q_{\tilde v}(x^i)]\right| &\le
    \frac{\sigma_{2}}{\sqrt{N}} |\Omega|  \nu_{d},
\end{align}
 where $\nu_{d-1},\nu_d$ are the standard normal distributions in $d-1$ and $d$ dimensions.
Here $\sigma_{1}$ is the square root of the variance of $(u_n-g)^2$ given by 
\begin{align*}
    \sigma_{1}^2 = \int_{\partial\Omega} \left|(u_n-g)^2 - \frac{1}{|\partial\Omega|}\int_{\partial\Omega} (u_n-g)^2 ds' \right|^2ds,
\end{align*}
and $\sigma_2$ is the square root of the variance of $-\Delta u_n -f$ given by 
\begin{align*}
    \sigma_2^2 = \int_\Omega \left|(-\Delta u_n -f)q_{\tilde v} - \frac{1}{|\Omega|} \int_{\Omega} (-\Delta u_n -f)q_{\tilde v} dx' \right|^2 dx.
\end{align*}
By $(a-b)^2 \le 2(a^2 +b^2)$ and Jensen's inequality $\left(\frac{1}{|\Omega|}\int_\Omega w dx\right)^2 \le \frac{1}{|\Omega|}\int_\Omega w^2 dx$ for any $w\in L^2(\Omega)$, we get 
\begin{align}\label{eq:sigma1}
    \sigma_1^2 &\le  2\|u_n-g\|_{L^4(\partial\Omega)}^4 + 2 |\partial\Omega|\left(\frac{1}{|\partial\Omega|}\int_{\partial\Omega} (u_n-g)^2 ds'\right)^2 \le  4\|u_n-g\|_{L^4(\partial\Omega)}^4.
\end{align}
Similarly,
$$\sigma_2^2 \le 4\|-(\Delta u_n - f)q_{\tilde v} \|_{L^2(\Omega)}^2 \le 4\|q_{\tilde v}\|_{L^\infty(\Omega)}^2 \|-\Delta u_n -f)\|_{L^2(\Omega)}^2.$$
From \eqref{eq:tilevest}, $q_{\tilde v}$ satisfies the inequality 
\begin{align*}
    \|q_{\tilde v } - \tilde v\|_{L^\infty(\Omega)} = \inf_{v\in\mathcal V_\tau} \|v-\tilde v\|_{L^\infty(\Omega)} \le C_1M^{-s
    +2}\|\Delta u_n + f\|_{W^{s-2,\infty}(\Omega)},
\end{align*}
and hence 
\begin{align*}
    \|q_{\tilde v}\|_{L^\infty(\Omega)} \le \|q_{\tilde v } - \tilde v\|_{L^\infty(\Omega)} + \|\tilde v\|_{L^\infty(\Omega)} \le (1+C_1M^{-s+2}) \|\Delta u_n + f\|_{W^{s-2,\infty}(\Omega)},
 \end{align*}
and thus 
\begin{align}\label{eq:sigma2}
    \sigma_2^2 \le 4(1+C_1M^{-s+2})^2 \|\Delta u_n + f\|_{W^{s-2,\infty}(\Omega)}^4.
\end{align}
Therefore, combing \eqref{eq:sigma1} and \eqref{eq:sigma2} leads to
\begin{align}\label{eq:lluq}
|\L(&u_n,q_{\tilde v}) - \hat \L(u_n,q_{\tilde v})| \nonumber \le  \|u_n-g\|_{L^4(\partial\Omega)}^2 |\partial\Omega| N_b^{-\frac12} \nu_{d-1}  +  2(1+C_1M^{-s+2}) \|\Delta u_n + f\|_{W^{s-2,\infty}(\Omega)}^2|\Omega| N^{-\frac12} \nu_d.
\end{align}
Likewise, we can also bound $\L(w_{\tilde u},v_n) - \hat L(w_{\tilde u},v_n)$,
\begin{align*}
    |\L(&w_{\tilde u},v_n) - \hat \L(w_{\tilde u},v_n) | \nonumber\\
    &\le 2 \|w_{\tilde u} -g\|_{L^4(\partial\Omega)}^2 |\partial\Omega| N_b^{-\frac12} \nu_{d-1} + 4\|v_n\|_{L^2(\Omega)} \|-\Delta w_{\tilde u} -f\|_{L^\infty(\Omega)} |\Omega| N^{-\frac12} \nu_d\nonumber\\
    &\le  C \|w_{\tilde u} - \tilde u\|_{L^\infty(\Omega)}^2|\partial\Omega| N_b^{-\frac12}\nu_{d-1} +  4 \|v_n\|_{L^2(\Omega)} \|-\Delta w_{\tilde u}  + \Delta \tilde u -v_n\|_{L^\infty(\Omega)}|\Omega| N^{-\frac12}\nu_d \nonumber\\
    &\le  C \|w_{\tilde u} - \tilde u\|_{L^\infty(\Omega)}^2 |\partial\Omega| N_b^{-\frac12} \nu_{d-1} + C (\|w_{\tilde u} - \tilde u\|_{W^{2,\infty}(\Omega)}  + \|v_n\|_{L^{\infty}(\Omega)})\|v_n\|_{L^2(\Omega)} |\Omega| N^{-\frac12}\nu_d.
\end{align*}
With \eqref{eq:tilewest}, the above inequality implies 
\begin{align*}
     |&\L(w_{\tilde u},v_n) - \hat L(w_{\tilde u},v_n) | \nonumber\\
     &\le C C_2^2 %\max\{R_2^4
     \max\{1,\log^4(\beta M^{s+d+2})\} M^{-2s+4} (\|v_n\|_{W^{s-2,\infty}(\Omega)}^2 + \|u\|_{W^{s,\infty}}^2)  \cdot (|\partial\Omega| N_b^{-\frac12} \nu_{d-1} + 
     % \|v_n\|_{L^2(\Omega)}^2 
     |\Omega|N^{-\frac12}\nu_d). 
     % &\quad + CC_2(M,\tilde u)
\end{align*}
Combing this with \eqref{eq:lluq} and \eqref{eq:M0}, we obtain  for $M>M_0$,
\begin{align}
    I_{MC} \le  C(1+M^{-s+2})(N_b^{-\frac12}\nu_{d-1} + N^{-\frac12}\nu_d),
\end{align}
 where $C>0$ is a constant depending on $d,s,|\Omega|$, $\|u\|_{W^{s,\infty}(\Omega)}, \|u_n\|_{W^{s,\infty}(\Omega)}$, $\|v_n\|_{W^{s-2,\infty}(\Omega)}$.

Note that alternative numerical integration methods can be employed to attain an enhanced  error bound. A judicious selection includes utilizing  Gaussian quadrature in low dimensions and opting for the Quasi-Monte Carlo method or the Monte-Carlo method when dealing with higher dimensions \cite{robert1999monte}.

The analysis presented above enables us to arrive at a conclusion articulated in the following theorem.

\begin{theorem} 
Let $\Omega \in \mathbb{R}^d$ be a domain with smooth boundary $\partial \Omega$, and let  $u\in H^2(\Omega)$ be the solution to the Dirichlet problem (\ref{eq:1}), where $f\in W^{s,\infty}(\Omega), g\in W^{s-2,\infty}(\partial\Omega)$ with $s > 2, s\in \mathbb{N}$. For $M \geq\max \{5d^2,M_0\}$ with $M_0$ given by \eqref{eq:M0}, 
there exists two feedforward neural networks   $\mathcal{U}_\theta,\mathcal V_\tau$, each with two hidden layers,  activation function $\tanh$ and width given by $\max\{3\left[\frac{s}{2}\right]\binom{s+d-1}{s-1} + d(M-1),3\left[\frac{d+2}{2}\right]5^d M^d\}$, such that the result $(u_n,v_n) \in \mathcal U_\theta \times \mathcal V_\tau$ after $n$-th step training of InfSupNet satisfies  \begin{align}\label{eq:finalerr}
          \|&u_n-u\|_{H^{1/2}(\Omega)}^2 + \|v_n\|_{L^2(\Omega)}^2 \le O(1) (M^{-s+2}+ (N_b^{-\frac12}\nu_{d-1} + N^{-\frac12}\nu_{d})) + I_{GP},
  \end{align}
  % where $\nu$ is a standard normal random variable,
   where $\nu_{d-1},\nu_d$ are standard normal random variables in the $d-1$ and $d$ dimensions,
  $O(1)$ may depend on 
   $d,s,|\Omega|$, $\|u\|_{W^{s,\infty}(\Omega)}, \|u_n\|_{W^{s,\infty}(\Omega)}$ and $\|v_n\|_{W^{s-2,\infty}(\Omega)}$,
  but stay bounded even when $M, N$ and $N_b$ increase. 
\end{theorem}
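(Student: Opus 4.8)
The plan is to assemble \eqref{eq:finalerr} from the error decomposition of Theorem \ref{thm2} together with the component bounds on $I_{NN}$ and $I_{MC}$ derived in Section 4.2, the only genuinely new ingredient being the explicit choice of network architecture dictated by Proposition \ref{lm:mishra}. Recall that $v=0$, so $\|v_n-v\|_{L^2(\Omega)}=\|v_n\|_{L^2(\Omega)}$, and that inequality \eqref{eq:uuvv} of Theorem \ref{thm2} already reduces the left-hand side of \eqref{eq:finalerr} to $C(I_{NN}+I_{MC}+I_{GP})$; thus the task is to pin down the architecture and to invoke the bounds on $I_{NN}$ and $I_{MC}$ in a way that keeps $I_{GP}$ untouched.

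First I would fix the architecture. The two networks enter the decomposition only through the best approximations $q_{\tilde v}$ of $\tilde v=-\Delta u_n-f$ in $\mathcal V_\tau$ and $w_{\tilde u}$ of $\tilde u$ (the solution of \eqref{eq:tildeu}) in $\mathcal U_\theta$. Since $u_n,v_n$ are $\tanh$-networks they are $C^\infty$, and $f\in W^{s,\infty}(\Omega)$, so $\tilde v\in W^{s-2,\infty}(\Omega)$; meanwhile, writing $\tilde u=u-\zeta$ with $-\Delta\zeta=v_n$ in $\Omega$, $\zeta=0$ on $\partial\Omega$, elliptic (Schauder/$W^{k,p}$) regularity for the Poisson problem on a smooth domain gives $\|\tilde u\|_{W^{s,\infty}(\Omega)}\le C(\|v_n\|_{W^{s-2,\infty}(\Omega)}+\|u\|_{W^{s,\infty}(\Omega)})$, and $u\in W^{s,\infty}(\Omega)$ by the same theory from $f\in W^{s,\infty}$, $g\in W^{s-2,\infty}$. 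I then invoke Proposition \ref{lm:mishra} twice — once to produce a two-hidden-layer $\tanh$ network approximating $\tilde u$ in $W^{2,\infty}(\Omega)$ (this is \eqref{eq:tilewest}) and once to approximate $\tilde v$ in $L^\infty(\Omega)$ (this is \eqref{eq:tilevest}) — each requiring $M\ge 5d^2$ and a width given by the proposition; taking $\mathcal U_\theta$ and $\mathcal V_\tau$ to have the larger of the two widths yields the stated $\max\{\cdot,\cdot\}$ expression.

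Next I would collect the component estimates already obtained in Section 4.2. Under $M\ge\max\{5d^2,M_0\}$ with $M_0$ as in \eqref{eq:M0}, the computations there give $I_{NN}\le 2CM^{-s+2}$ and $I_{MC}\le C(1+M^{-s+2})(N_b^{-1/2}\nu_{d-1}+N^{-1/2}\nu_d)$, where $C$ depends only on $d,s,|\Omega|$ and on $\|u\|_{W^{s,\infty}(\Omega)}$, $\|u_n\|_{W^{s,\infty}(\Omega)}$, $\|v_n\|_{W^{s-2,\infty}(\Omega)}$. Substituting these and the leftover training term $I_{GP}$ into \eqref{eq:uuvv} — which is itself the consequence of the decomposition \eqref{eq:est} and the elliptic stability Lemma \ref{lm:l2elliptic} — gives $\|u_n-u\|_{H^{1/2}(\Omega)}^2+\|v_n\|_{L^2(\Omega)}^2\le O(1)(M^{-s+2}+(N_b^{-1/2}\nu_{d-1}+N^{-1/2}\nu_d))+I_{GP}$, which is \eqref{eq:finalerr}.

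I expect the main subtlety to be not the assembly but the assertion that the implied constant $O(1)$ stays bounded as $M,N,N_b\to\infty$. The constants produced above depend on $\|u_n\|_{W^{s,\infty}(\Omega)}$ and $\|v_n\|_{W^{s-2,\infty}(\Omega)}$, which a priori depend on the architecture and on the outcome of training; one must argue (or assume, as the statement does) that these higher-order Sobolev norms of the trained networks do not grow with the refinement parameters — plausible because the networks approximate functions of fixed, bounded regularity and training is presumed not to produce spurious oscillations. A secondary point to state carefully is the elliptic-regularity input $\|\tilde u\|_{W^{s,\infty}(\Omega)}\le C(\|v_n\|_{W^{s-2,\infty}(\Omega)}+\|u\|_{W^{s,\infty}(\Omega)})$, which rests on the smoothness of $\partial\Omega$ and on passing to the $L^\infty$-endpoint of the standard $W^{k,p}$ Poisson estimates; everything else is bookkeeping of bounds already established in Sections 3 and 4.
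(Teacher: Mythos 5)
Your proposal is correct and follows essentially the same route as the paper: the theorem is proved there simply by assembling the decomposition of Theorem \ref{thm2} (via \eqref{eq:uuvv}) with the Section 4.2 bounds $I_{NN}\le 2CM^{-s+2}$ and $I_{MC}\le C(1+M^{-s+2})(N_b^{-1/2}\nu_{d-1}+N^{-1/2}\nu_d)$, the architecture being fixed by the two invocations of Proposition \ref{lm:mishra}. Your remarks on the elliptic-regularity input for $\tilde u$ and on the dependence of the $O(1)$ constant on the trained networks' Sobolev norms match the paper's (implicit) treatment of these points.
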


\begin{remark} 
The error $I_{GP}$ in \eqref{eq:ils} depends on the algorithm used to train the neural networks. Suppose in each of the training step, we conduct the Gradient Descent/Ascent until reaching the local minimum/maximum, i.e. $$
u_{n+1} = \inf_{w\in \mathcal U_\theta} \hat \L(w,v_n) \quad \text{and}\quad v_{n+1} = \sup_{v\in \mathcal V_\tau}\hat \L(u_n,v),
$$
then $I_{GP}$ can be expressed as 
\begin{align*}
    I_{GP} = \hat \L(u_{n+1},v_n) - \hat \L(u_{n},v_{n+1}).
\end{align*}
Hence, if the training algorithm converges, $(u_n,v_n)\to (\hat u, \hat v)$ as $n\to\infty$, the above error will  converge to zero. Note that in practice $\hat \L(u_n,v_n)$ may not converge to zero,  still $I_{GP}$ provides a good quantification on the training error.

If $\hat L(\theta,\tau)$ is assumed to be convex in $\theta$ and concave in $\tau$, it is proved in \cite{nemirovski2004prox} that with a single step Gradient Descent-Ascend (GDA) 
the training loss gap $I_{GP}$ after $n$ training steps satisfies
\begin{align}\label{eq:lse}
    I_{GP} = \max_\tau \hat L(\theta_n,\tau) - \min_{\theta} \hat L(\theta,\tau_n) \le C \frac{|\theta^*-\theta_0|^2 + |\tau^*-\tau_0|^2}{\eta n},
\end{align}
where $C$ is an absolute constant, $\theta_0,\tau_0$ are the initial parameters, $\eta$ is the step size and $(\theta^*,\tau^*)$ is the saddle point of $\hat L(\theta,\tau)$.
Therefore, for any small $\epsilon>0$, there exists $n$ such that 
$I_{GP}<\epsilon$ after $n$ training  steps.  
For non-convex $\hat L$, it is hard to obtain a sharp convergence rate like (\ref{eq:lse}). 
%the above inequality is often not easy to obtain. 
Nonetheless, if $\hat L(\theta,\tau)$ is non-convex in $\theta$ but concave in $\tau$, \cite{lin2020gradient} shows the convergence to a set where $\nabla_{\theta,\tau} \hat L$ becomes small after finite training steps. 

For the the multistep GDA approach, convergence rates are  established in \cite{nouiehed2019solving}. 
%The training error can be estimated if $\hat L (\theta,\tau)$ has some desired structural properties. 
For general non-convex non-concave case, convergence analysis is rather difficult, and we refer to \cite{razaviyayn2020nonconvex} for related discussions.  
\end{remark}
\begin{remark}
Based on the estimate \eqref{eq:finalerr}, it is evident that the neural network approximation error (comprising the first term on the right) 
diminishes with the increasing width of the neural network.
Simultaneously, the Monte-Carlo sampling error reduces with the sampling size $N_b$ and $N$. 
The anticipated trend is a decrease in the gap of the training loss with the progression of training iterations.
\end{remark}

\section{Experiments}

We apply our Inf-SupNet algorithm to a set of elliptic problems posed on a bounded domain. Monte Carlo methods are employed to generate training data points.
To assess the algorithm's  accuracy, we compute the relative $L^2$ error $\|u_\theta - u^*\|_{L^2}/\|u^*\|_{L^2}$, where  $u_\theta$ is the computed solution  and $u^*$ is the true solution $u^*$. The norms are computed through Monte Carlo integration on test data points, separate from the training set. 

The solution $u$ and the Lagrangian multiplier $v$ are both represented using fully connected neural networks with a $\tanh$ activation function. 
Training the Inf-SupNet involved the use of the \emph{RMSprop} gradient descent algorithm \cite{tieleman2012rmsprop}. To ensure rapid and accurate training, we set the learning rate to $0.001$ and decay it by $0.5$ every $5000$ training epochs.

\subsection{Poisson equation with Dirichlet boundary conditions}
We first show the capacity of our method in solving high dimensional Poisson equations. We first consider the domain $\Omega = [-1,1]^d$ and examine the Poisson equation with homogeneous Dirichlet boundary conditions, i.e. $g(x)=0$ for $x\in \partial\Omega$. We consider the source term 
\begin{align*}
    f(x) = \prod_{i=1}^d \cos \left(\frac{\pi}{2} x_i\right).
\end{align*}
with $d=5$ and use a neural network with 100 neurons in each hidden layer and 4 hidden layers to represent both the $\mathcal U$ and $\mathcal V$ networks. We use \emph{RMSprop} with learning rate $0.0005$ and set the learning rate to scale by 0.5 with a  step size 2000. Figure \ref{fig:dirichlet} shows that the training error reaches approximately around 0.6\% after 20,000 iterations. Moreover, according to the bottom left figure, the point-wise error is less than 0.01. This demonstrates that our algorithm provides an accurate numerical solutions even in high dimensions. Moreover, the top right figure illustrates that the Lagrangian multiplier network output $v_\tau$ is small, which agrees with the theoretical analysis. 

Examining the loss curve in Figure \ref{fig:dirichlet}, it is evident that the loss does not exhibit  monotonic decay throughout the training iterations. This behavior is inherent in the min-max optimizations, where the gap between the training loss during gradient descent and ascent that converges to zero, rather than the loss itself. However, although the loss may increase, the error consistently decreases until the loss stablizes.

\begin{figure}
	\centering
    \includegraphics[width=0.32\linewidth]{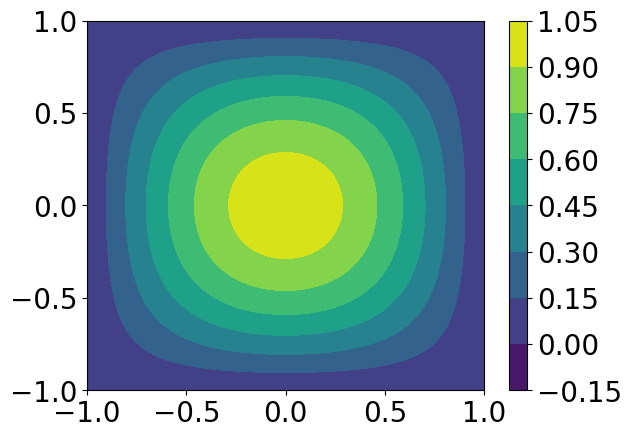} 
    \includegraphics[width=0.32\linewidth]{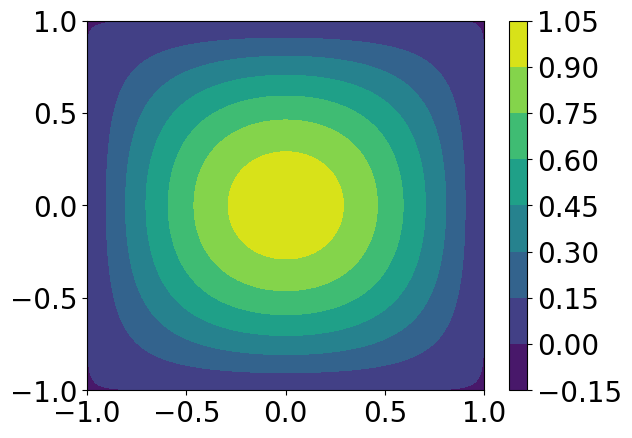} 
    \includegraphics[width=0.32\linewidth]{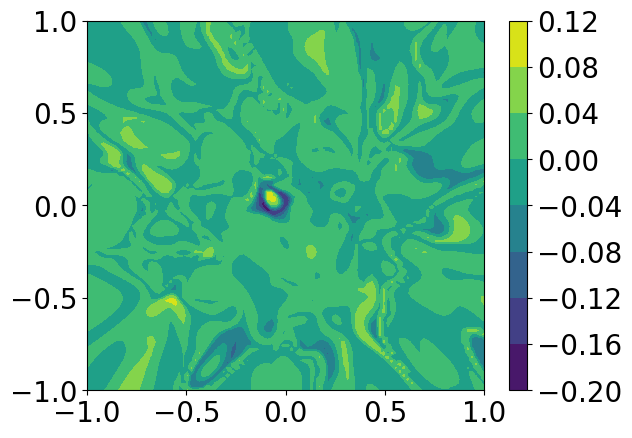}
    \includegraphics[width=0.32\linewidth]{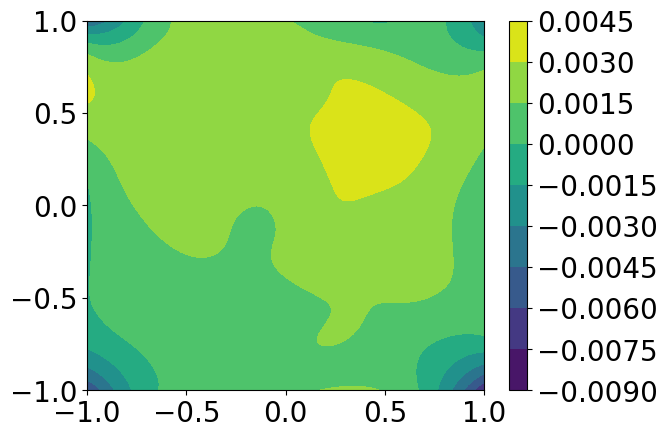}
    \includegraphics[width=0.32\linewidth]{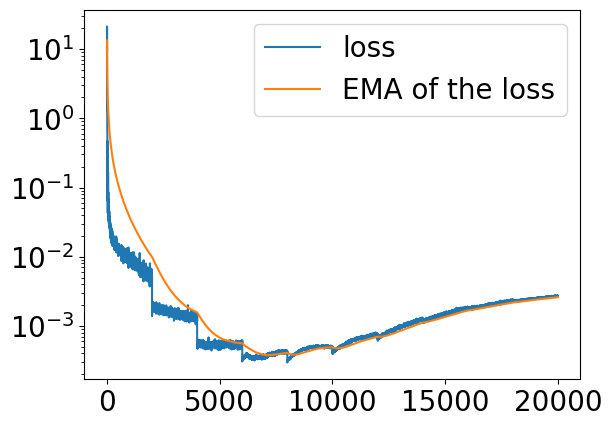}
    \includegraphics[width=0.32\linewidth]{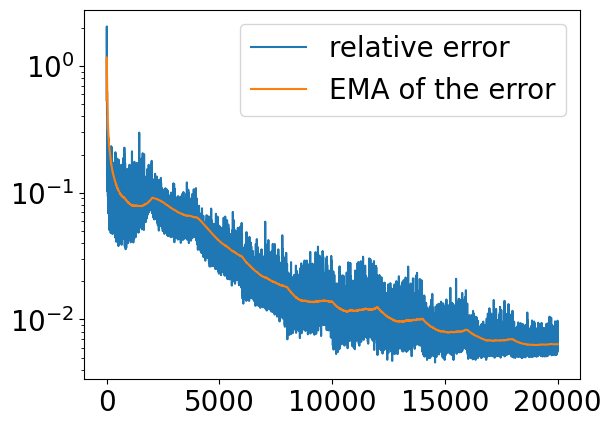}

 \caption{Numerical results for the Poisson equation with Dirichlet BC. Top left: the true solution $u^*$, top middle: the computed solution $u_\theta$; top right: the Lagrangian multiplier $v_\tau$; bottom left: the difference $u_\theta-u^*$; bottom  middle the training loss over iterations; bottom right: the test error (relative $L^2$ error with respect to the true solution) over iterations. The solutions are plotted over $x_2,x_4$ with $x_1=x_3=x_5=0$. The EMAs of the loss and error are plotted at bottom middle and right. } 
 
    \label{fig:dirichlet}
\end{figure}

To show the convergence of the algorithm, we compute the relative error with true solutions for different numbers of sampling points with $d=3$, and the relative error, presenting the results in Table \ref{tab:err1}. The table shows that the error decreases as the number of sampling points increases. The number of sampling points not only influences the Monte-Carlo integration error, but also impacts the training error. 
Hence, to get a more accurate numerical solution, we can increase the number of sampling points. However, this comes at the expense of  additional computational costs.

\begin{table}[]
    \centering
        \caption{Error table for the Poisson equation for different number of sampling points(d=3)}
    \label{tab:err1}

    \begin{tabular}{cccc}
          $N_{in}$ & $N_{bd}$ & relative $L^2$ error  \\ \hline
         2500 & 300 & $9.1638\times 10^{-3}$  \\
          5000 &  432& $4.3924\times 10^{-3}$ \\
          7500 & 516 & $3.6575\times 10^{-3}$ \\
          10000 & 600 & $2.6667\times 10^{-3}$
    \end{tabular}
\end{table}
The selection of the neural network structure is crucial for algorithm implementation. Table \ref{tab:err2} presents the error corresponding to various widths and depths (where depth represents the number of hidden layers). Elevating both the width and depth enhances the network's capability to approximate the true solution, subsequently reducing the approximation error and yielding a closer alignment with the true solution. 

\begin{table}[]
    \centering
        \caption{Error table for the Poisson equation for different neural network structures (d=3)}
    \label{tab:err2}

    \begin{tabular}{ccccc}
         width   &  error (depth=1) & error (depth=2) &error (depth=3)  \\ \hline
          10 &  $5.7629\times 10^{-2}$ & $5.7293\times 10^{-2}$ & $2.3362 \times 10^{-2}$  \\
          20 &  $1.6861\times 10^{-2}$ & $8.4221\times 10^{-3}$ & $5.3472\times 10^{-3}$ \\
          40 &  $5.9664\times 10^{-3}$ &$2.6667\times 10^{-3}$ & $2.3664\times 10^{-3}$     \end{tabular}
\end{table}

\subsection{Irregular domains}
We consider the domain $\Omega = [-1,1]^2 \setminus [0,1]^2$ and examine the Poisson equation with homogeneous Dirichlet boundary conditions, where 
\begin{align*}
    f(x) = e^{-(x+\frac12)^2 - (y+\frac12)^2}.
\end{align*}
Taking $4$ hidden layers, each with $40$ neurons in both the $\mathcal U$ and $\mathcal V$ networks, we approximate the  solution since no analytical solution is available. Also, we use the numerical solution through  finite element methods as  as a reference for the true solution $u^*$. Figure \ref{fig:Lshape} illustrates that the relative $L^2$ error reaches approximately $0.01$ after $20,000$ training iterations, with pointwise errors being  less than $0.01$. 
This observation shows that our algorithms work well in handling irregular domains.

\begin{figure}
    \centering
    \includegraphics[width=0.32\linewidth]{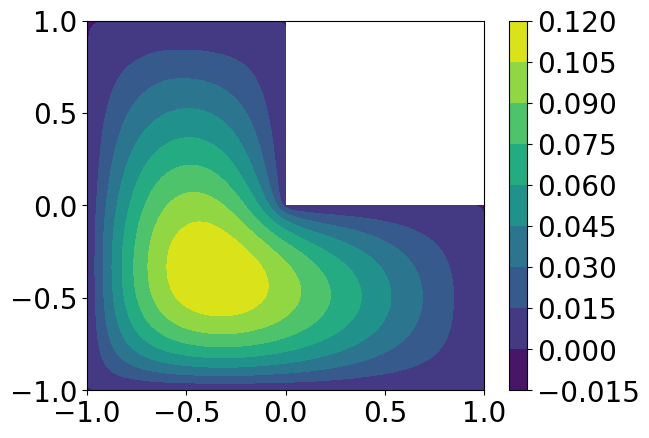}
    \includegraphics[width=0.32\linewidth]{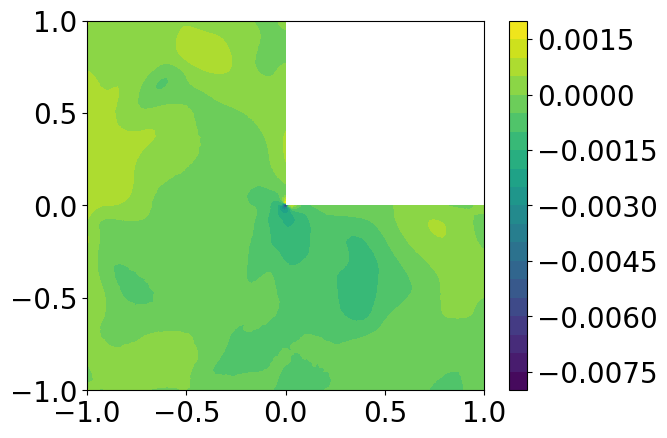}
    \includegraphics[width=0.32\linewidth]{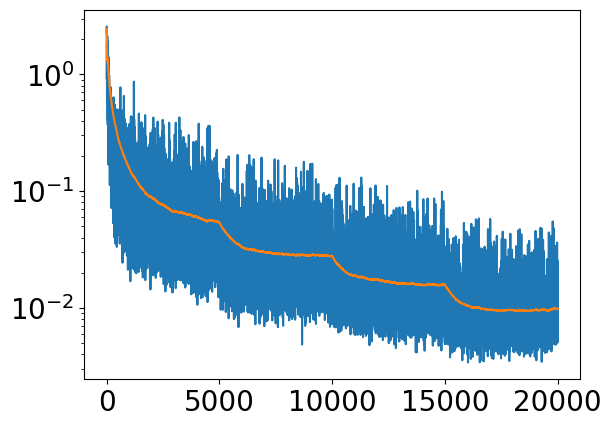}
    \caption{Numerical results on L-shaped domain. Left: solution computed by InfSupNet; middle: pointwise error with the FEM solution; Right: relative $L^2$ error during training}
    \label{fig:Lshape}
\end{figure}
\subsection{Mixed boundary conditions}
We consider the domain $\Omega=[0,1]\times [0,1]$ and the boundary set $\Gamma_D=\{(0,y)\cup (1,y) \in \partial\Omega \}$, $\Gamma_N=\{(x,0)\cup (x,1) \in \partial\Omega\}$, with boundary conditions 
\begin{align*}
    u = 0\quad \text{ on }\Gamma_D,\\
    \frac{\partial u}{\partial n} = g \quad \text{ on }\Gamma_N,
\end{align*}
where $g = \sin (5x)$. The source term in the equation $f=10 \exp(-((x-0.5)^2+(y-0.5)^2)/0.02)$. The results shown in Figure \ref{fig:mix} affirm the effectiveness of our approach in adeptly managing mixed boundary conditions.
\begin{figure}
    \centering
    \includegraphics[width=0.32\linewidth]{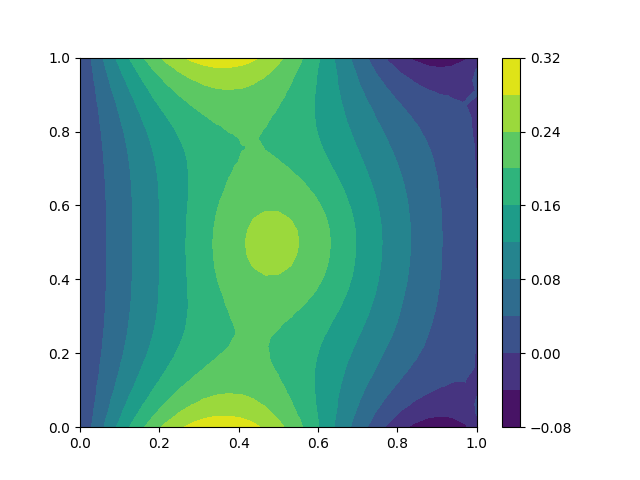}
    \includegraphics[width=0.32\linewidth]{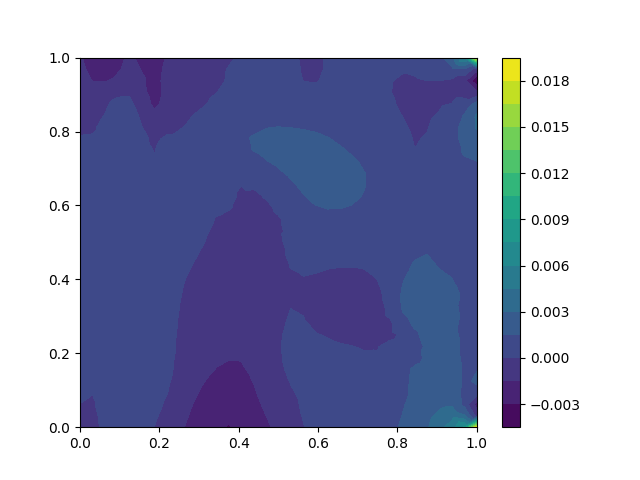}
    \includegraphics[width=0.32\linewidth]{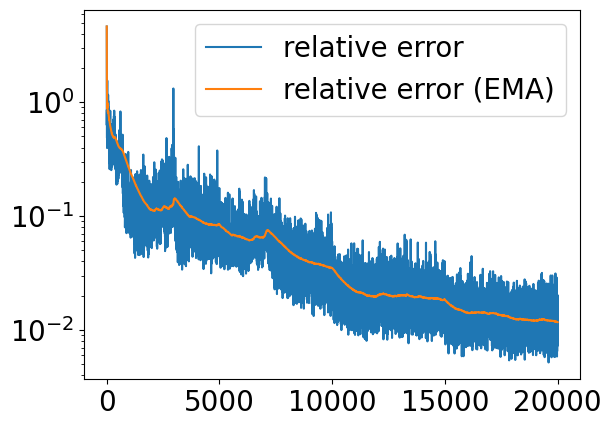}
    \caption{Numerical results for mixed boundary conditions. Left: solution computed by InfSupNet; middle: pointwise error with the FEM solution; Right: relative $L^2$ error during training}
    \label{fig:mix}
\end{figure}

\subsection{Semilinear elliptic PDE}
We address the semilinear elliptic PDE giving by %{\color{red}HLIU: what is $a(x)$?} 
\begin{align*}
 -\nabla \cdot (a(x)\nabla u) = f, \quad \text{ in } \Omega, \\
 u = 0,\quad \text{ on }\partial\Omega,
\end{align*}
where $a(x)=\sum_{i=1}^d x_i$, $f=f(x) =-(2d+2)\sum_{i=1}^d x_i$, and the true solution is $u^*(x)=1+|x|^2,$  with $d=5$. We randomly select 10,000 sampling points inside $\Omega$ and 400 points on the boundary $\partial\Omega$. During training, we use a fixed learning rate of 0.0002. The relative $L^2$ error between the computed solution and the true solution $u^*$ is plotted in Figure \ref{fig:senon}, revealing that the relative error reaches approximately $0.01$ after $10000$ training iterations.

\subsection{Nonlinear elliptic PDE}
We solve the nonlinear PDE % {\color{red}HLIU: what is $q$?} 
\begin{align*}
    - \nabla \cdot (q(u) \nabla u(x)) = 0,\quad \text{in } \Omega,\\
    u = 0, \quad \text{on } \{x_1=0\}\cap \partial\Omega, \\
    u=1,\quad \text{on } \{x_1=1\} \cap \partial\Omega, \\
    \frac{\partial u}{\partial n}=0,\text{ on } \partial\Omega \backslash \{x_1=0 \text{ or } 1\},
\end{align*}
where,  $q(u)=(1+u)^m$.
The true solution $u(x) = ((2^{m+1}-1)x_1+1)^{\frac{1}{m+1}}-1$. We consider $m=2$ and $d=5$.  The relative $L^2$ error between the computed solution and the true solution $u^*$ is plotted in Figure \ref{fig:senon}, showing that the relative error reaches approximately $0.01$ after $10000$ training iterations.

\begin{figure}
    \centering
    \includegraphics[width=0.32\linewidth]{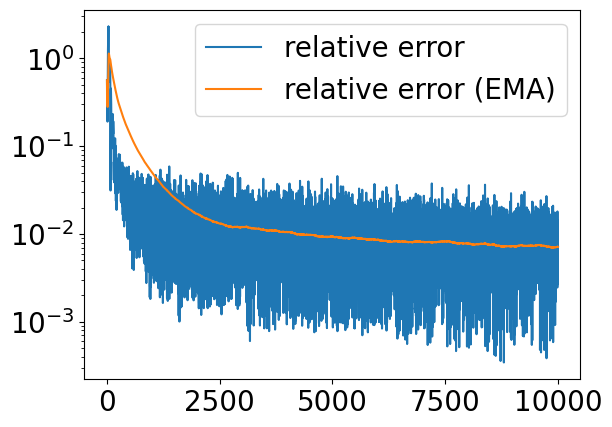}
    \includegraphics[width=0.32\linewidth]{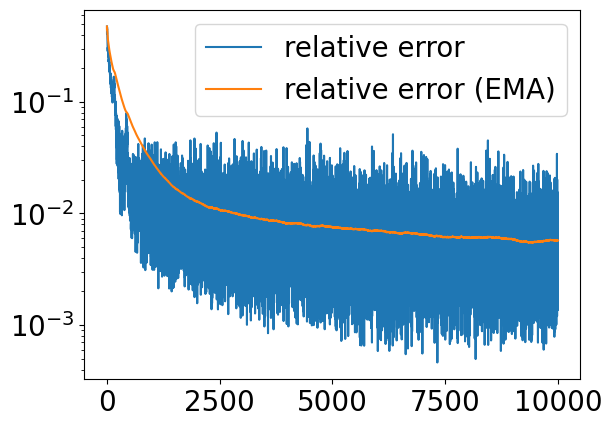}
    \caption{Numerical results for semilinear and nonlinear elliptic PDEs with $d=5$. Left: Relative error with iterations for the semilinear PDE; right: Relative error with iterations for the nonlinear PDE}
    \label{fig:senon}
\end{figure}

 \section{Concluding remarks}
We introduce Inf-SupNet, a novel method for learning solutions to elliptic PDE problems employing two deep neural networks. Our approach is grounded in two fundamental concepts. Firstly,  we establish an inf-sup reformulation of the underlying PDE problem, incorporating both the unknown variable and a Lagrangian multiplier. Secondly, we address the saddle point problem by leveraging two deep neural networks. Theoretically,  we demonstrate that the global approximation error is governed by three factors: the training error, the Monte-Carlo sampling error, and the universal approximation error inherent in neural networks. 

In our experimental evaluations, we showcase the stability and accuracy of the method when applied to high-dimensional elliptic problems on both regular and irregular domains. However, certain limitations of Inf-SupNet necessitate careful consideration. Firstly, while the method is robust with guaranteed error bounds, it may not necessarily outperform some  state-of-the-art methods. Secondly, akin to most min-max problems, the selection of the optimizer is problem-dependent.  Further investigation of training error would be highly beneficial.  Thirdly, the error bounds we obtained are based on a posterior estimate, deriving a prior estimate would enhance   the quantification of error. Future endeavours may focus on improving the efficiency of Inf-SupNet and extending its applicability to a broader class of time-dependent problems.

\section*{Acknowledgements}
This research was partially supported by the National Science Foundation under Grant DMS1812666. We would like to thank Marius Zeinhofer in pointing out a mistake in the initial draft.

\appendix
\section*{Appendix} 
We present a lemma along with its proof.
\begin{lemma} \label{lem4} 
Let $(u^*,v^*) \in H^2(\Omega)\times L^2(\Omega)$. Then $(u^*,v^*)$ is a saddle point of $\L(u,v)$ (defined in \eqref{eq:is}) if and only if the strong min-max property \eqref{eq:dual} holds.
\end{lemma}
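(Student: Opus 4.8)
The plan is to prove both directions of the equivalence by unpacking the definition of a saddle point and relating it to the two optimal values $p^* := \inf_u \sup_v \L$ and $d^* := \sup_v \inf_u \L$. The starting observation is the weak duality inequality $d^* \le p^*$, which always holds: for any fixed $q$, $\inf_u \L(u,q) \le \L(w,q) \le \sup_v \L(w,v)$ for all $w$, so $\inf_u \L(u,q) \le \inf_w \sup_v \L(w,v) = p^*$, and taking the supremum over $q$ gives $d^* \le p^*$. Thus the strong min-max property \eqref{eq:dual} is precisely the assertion $p^* = d^*$ together with the attainment of both the outer inf and the outer sup.

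For the forward direction, I would assume $(u^*,v^*)$ is a saddle point, so \eqref{eq:lsa} holds. From $\L(u^*,q) \le \L(u^*,v^*)$ for all $q$ we get $\sup_q \L(u^*,q) \le \L(u^*,v^*)$, hence $p^* \le \sup_q \L(u^*,q) \le \L(u^*,v^*)$. Symmetrically, from $\L(u^*,v^*) \le \L(w,v^*)$ for all $w$ we get $\L(u^*,v^*) \le \inf_w \L(w,v^*) \le d^*$. Chaining these with weak duality yields $p^* \le \L(u^*,v^*) \le d^* \le p^*$, forcing equality throughout; in particular $p^* = d^*$, the outer inf in $p^*$ is attained at $u^*$, and the outer sup in $d^*$ is attained at $v^*$. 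This establishes \eqref{eq:dual}.

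For the converse, I would assume \eqref{eq:dual} holds and that the common value is attained, say at $u^*$ for the primal and $v^*$ for the dual (this is the natural reading of "$u^*$ and $v^*$ are the corresponding primal and dual optimal solutions"). Then $\sup_q \L(u^*,q) = p^* = d^* = \inf_w \L(w,v^*)$. Now for any $w$ and $q$,
\begin{align*}
  \L(u^*,q) \le \sup_{q'} \L(u^*,q') = \inf_{w'} \L(w',v^*) \le \L(w,v^*).
\end{align*}
Setting $w = u^*$ gives $\L(u^*,q) \le \L(u^*,v^*)$ for all $q$; setting $q = v^*$ gives $\L(u^*,v^*) \le \L(w,v^*)$ for all $w$. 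Together these are exactly \eqref{eq:lsa}, so $(u^*,v^*)$ is a saddle point.

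The argument is essentially formal and robust — it uses nothing about the specific functional $\L$ beyond the min-max structure, so the main subtlety is bookkeeping about attainment rather than any real obstacle. The one point to be careful about is the precise meaning of "the strong min-max property holds": if it is taken to mean only the scalar identity $p^* = d^*$ without asserting that the extrema are attained, then the converse direction needs the existence of minimizers/maximizers supplied separately (which in the present setting is guaranteed by Lemma \ref{lm4} and the well-posedness of \eqref{eq:1}); I would state this attainment hypothesis explicitly so the equivalence reads cleanly. Beyond that, the proof is a short chain of inequalities in each direction, and I do not anticipate a genuinely hard step.
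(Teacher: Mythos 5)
Your proof is correct. The forward direction is essentially identical to the paper's: both establish weak duality by the same chain of inequalities and then sandwich $\L(u^*,v^*)$ between $\inf_u\sup_v\L$ and $\sup_v\inf_u\L$ using \eqref{eq:lsa}. The converse direction is where you genuinely diverge. The paper does not argue abstractly; it invokes the explicit computations \eqref{eq:ll1} and \eqref{eq:ll2} to identify the primal optimum as a function with $-\Delta u^*-f=0$ and the dual optimum as $v^*=0$, and then verifies the two saddle-point inequalities by direct substitution into $\L$. Your argument instead runs the standard convex-duality chain $\L(u^*,q)\le\sup_{q'}\L(u^*,q')=p^*=d^*=\inf_{w'}\L(w',v^*)\le\L(w,v^*)$ and specializes $w=u^*$, $q=v^*$. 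Your route is more general (it uses nothing about the specific form of $\L$, so it would apply verbatim to the elliptic operator \eqref{eq:el}--\eqref{eq:elB} or any other Lagrangian) and is self-contained, whereas the paper's converse leans on structure-specific facts that are really part of the proof of Theorem \ref{thm1}. The paper's version, on the other hand, delivers the extra information $v^*=0$ for free, which is used later in the error analysis. You are also right to flag the attainment issue: as literally written, \eqref{eq:dual} is only the scalar identity, and the converse needs the hypothesis that $u^*$ and $v^*$ are the attained primal and dual optima — the paper assumes this implicitly in the sentence following the definition, and your explicit statement of it is an improvement rather than a gap.
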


\begin{proof}
    First,  assume $(u^*,v^*)$ is a saddle point of $\L(u,v)$, we prove \eqref{eq:dual}. For any $u,q$ we have 
    \begin{align*}
        \L(u,q) \le \sup_{v\in L^2(\Omega)} \L(u,v),\text{ for any } q\in L^2(\Omega).
    \end{align*}
    Taking the infimum gives 
    \begin{align*}
        \inf_{u\in H^2(\Omega)} \L(u,q) \le \inf_{u\in H^2(\Omega)} \sup_{v\in L^2(\Omega)} \L(u,v)
    \end{align*}
    holds for any $q \in L^2(\Omega)$. Hence
    \begin{align}\label{eq:si}
    \sup_{q \in L^2(\Omega)} \inf_{u\in H^2(\Omega)} \L(u,q) \le \inf_{u\in H^2(\Omega)} \sup_{v\in L^2(\Omega)} \L(u,v). 
    \end{align}
    From \eqref{eq:lsa}, we get 
    \begin{align*}
        \sup_{q\in L^2(\Omega)} \L(u^*,q) \le \L(u^*,v^*) \le \inf_{w\in H^2(\Omega)} \L(w,v^*).
    \end{align*}
    Therefore,
    \begin{align*}
        \inf_{u \in H^2(\Omega)} \sup_{v \in L^2(\Omega} \L(u,v) &\le \sup_{q\in L^2(\Omega)} \L(u^*,q) \le \L(u^*,v^*) \le \inf_{w\in H^2(\Omega)} \L(w,v^*) \nonumber\\
        &\le \sup_{q\in L^2(\Omega)} \inf_{w\in H^2(\Omega)} \L(w,q),
    \end{align*}
    which together with \eqref{eq:si} implies \eqref{eq:dual}. 

    Next, assume the strong min-max property \eqref{eq:dual} holds, $u^*$ is the optimal solution to the primal problem and $v^*$ is the optimal solution to the dual problem. From \eqref{eq:ll1} and \eqref{eq:ll2} we can see that $u^*$ satisfies the equation $-\Delta u^* -f = 0$, and $v^*=0$. Therefore, for any $u\in H^2(\Omega)$,
    \begin{align*}
        \L(u^*,v^*) = \frac12 \|u^*-g\|_{L^2(\partial\Omega)}^2 = \inf_{u\in H^2(\Omega)} \frac12 \|u-g\|_{L^2(\partial\Omega)}^2 \le \frac12 \|u-g\|_{L^2(\Omega)}   = L(u,v^*).
    \end{align*}
    And for any $v \in L^2(\Omega)$:
    $$\L(u^*,v^*) = \frac12 \|u^*-g\|_{L^2(\partial\Omega)}^2 + (-\Delta u^*-f,v) \ge \L(u^*,v). $$
    These two inequalities imply that $(u^*,v^*)$ is a saddle point of $\L$.
\end{proof}

Here, we present a quantitative universal approximation result; refer to \cite{de2021approximation} for a comprehensive proof. 
\begin{prop}[\cite{de2021approximation}] \label{lm:mishra}
For $h \in W^{s,\infty}(\Omega)$, and any $M\ge 5d^2$, there exists a feedforward neural network $\mathcal{U}$ with two hidden layers,  having a width given by $$\max\{3\left[\frac{s}{2}\right]\binom{s+d-1}{s-1} + d(M-1),3\left[\frac{d+2}{2}\right]5^d M^d\},$$ such that 
\begin{align}
    \inf_{\hat h \in \mathcal U}\| h-\hat h\|_{L^\infty(\Omega)} \le \frac{C_1(d,s,|\Omega|)}
    {M^s} \|h\|_{W^{s,\infty}(\Omega)}
\end{align}
for some constant $C_1=C_1(d,s,|\Omega|)>0$, and for $k=1,\ldots,s-1$, and
\begin{align}
    \inf_{\hat h\in \mathcal U} \|h - \hat h\|_{W^{k,\infty}(\Omega)} \le 
    % \frac{C_2(d,s,k,M,|\Omega|,h)}{M^{s-k}}
    C_2(d,k,s,|\Omega|)\frac{\max\{R_k^k,\log^k(\beta M^{s+d+2})\}}{M^{s-k}}
    \|h\|_{W^{s,\infty}(\Omega)},
\end{align}
where $C_2 = C_2(d,k,s,|\Omega|)>0$ is a positive constant only depending on $d,k,s,|\Omega|$, 
\begin{align}\label{eq:beta}
\beta =C(d,k,s)\frac{\max \{1,\|h\|_{W^{k,\infty}(\Omega)}^\frac12\}}{ \min\{1,{\|h\|_{W^{s,\infty}(\Omega)}}^\frac12\}},    
\end{align}
with $C=C(d,k,s)$ being a positive constant depending on $d,k,s$,
and $R_k$ is the lowest value such that $\tanh^{(m)}(x)$ is decreasing on $[R_k,\infty)$ for all $1\le m \le k$.
\end{prop}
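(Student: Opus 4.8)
The plan is to prove the proposition by the classical route of \emph{localized polynomial approximation followed by emulation with tanh units}, which is exactly the strategy of \cite{de2021approximation}; the two competing width terms in the statement will emerge as the separate costs of the two hidden layers. After an affine rescaling so that $\Omega$ sits inside a cube, I would first cover $\Omega$ by $\mathcal O(M^d)$ axis-parallel subcubes $\{Q_\nu\}$ of side length $\mathcal O(1/M)$ together with a smooth partition of unity $\{\phi_\nu\}$ subordinate to this cover, and on each $Q_\nu$ replace $h$ by its (averaged) Taylor polynomial $p_\nu$ of degree $s-1$. By Taylor's theorem and the Bramble--Hilbert lemma, for $0\le k\le s-1$ one has the local bound $\|h-p_\nu\|_{W^{k,\infty}(Q_\nu)}\le C M^{-(s-k)}\|h\|_{W^{s,\infty}(\Omega)}$, and since $\sum_\nu \phi_\nu\equiv 1$ with $\|\phi_\nu\|_{W^{k,\infty}}\le C M^{k}$, writing $h-\tilde h=\sum_\nu \phi_\nu(h-p_\nu)$ with $\tilde h:=\sum_\nu \phi_\nu p_\nu$ and applying the Leibniz rule gives $\|h-\tilde h\|_{W^{k,\infty}(\Omega)}\le C M^{-(s-k)}\|h\|_{W^{s,\infty}(\Omega)}$. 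This reduces everything to emulating $\tilde h$, a finite sum of products of bump functions and monomials, by a tanh network.

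The heart of the argument is the quantitative emulation of \emph{monomials and products} by shallow tanh networks, measured in $W^{k,\infty}$. Exploiting that $\tanh$ is odd and analytic, its Taylor expansion at $0$ contains every odd power with nonzero coefficient, so a linear combination of $\tanh(\alpha_j\delta\,x)$ over $\lceil s/2\rceil$ suitably chosen scalings $\alpha_j$, divided by $\delta^p$, isolates the monomial $x^p$ up to a remainder of order $\mathcal O(\delta^2)$; the factor $3\lceil s/2\rceil$ counts the tanh units (a short finite-difference stencil over these scalings) needed to realize one monomial up to degree $s$, and $\binom{s+d-1}{s-1}$ counts the monomials in $d$ variables, giving the first layer its polynomial cost. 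Multivariate products $\prod_i x_i$ (needed for $\phi_\nu p_\nu$, a product of at most $d+2$ one-dimensional factors) are recovered from the scalar square via the polarization identity $xy=\tfrac14[(x+y)^2-(x-y)^2]$, again emulated by tanh, which produces the $3\lceil(d+2)/2\rceil 5^d M^d$ second-layer cost once summed over the $\mathcal O(5^d M^d)$ overlapping cells. The $L^\infty$ estimate follows by taking $\delta$ small, but for the $W^{k,\infty}$ estimate one must differentiate: $\partial^k$ of $\tanh(\alpha\delta x)/\delta^p$ picks up a factor $\delta^{k-p}\tanh^{(k)}(\alpha\delta x)$, and to keep the derivatives of $\tanh$ in the regime where they are monotone and decaying one is forced to evaluate at arguments of size at least $R_k$; balancing the resulting bound against the desired accuracy fixes $\delta\sim(\beta M^{s+d+2})^{-1}$, which is precisely where the threshold $R_k$ and the prefactor $\log^k(\beta M^{s+d+2})$ enter, with $\beta$ as in \eqref{eq:beta}.

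With these building blocks I would assemble the full network in two hidden layers: the first layer produces all required monomials together with the breakpoints of the bump functions (contributing the $+\,d(M-1)$ term, namely $d$ coordinates times $M-1$ thresholds), and the second layer forms the products $\phi_\nu p_\nu$ and sums them. Propagating the three error sources through the $W^{k,\infty}$ norm — the Taylor truncation $\mathcal O(M^{-(s-k)})$, the monomial-emulation error controlled by $\delta$, and the product-emulation error — and using that the cells overlap with bounded multiplicity so that summation over $\nu$ does not lose powers of $M$, yields the stated rate $M^{-(s-k)}$ times $\max\{R_k^k,\log^k(\beta M^{s+d+2})\}$, with $C_2$ absorbing $d,k,s,|\Omega|$; the $k=0$ case specializes to the clean $C_1 M^{-s}$ bound since no derivative amplification occurs.

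I expect the \emph{main obstacle} to be the derivative bookkeeping in the $W^{k,\infty}$ emulation of monomials and products: making $\tanh$ behave like a polynomial requires the small scaling $\delta$, but each of the $k$ differentiations amplifies the error by $\delta^{-1}$, so the whole estimate hinges on choosing $\delta$ to simultaneously control the Taylor remainder of $\tanh$ and the size of its high-order derivatives, which is the source of both the logarithmic factor and the constant $R_k$. Verifying that the Leibniz expansion of $\partial^k(\phi_\nu p_\nu)$ and the summation over the $\mathcal O(M^d)$ cells stay within the claimed constant — rather than accumulating an extra power of $M$ — is the other delicate point.
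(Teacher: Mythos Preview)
The paper does not actually prove this proposition: it is stated in the appendix with the preface ``Here, we present a quantitative universal approximation result; refer to \cite{de2021approximation} for a comprehensive proof,'' and no argument is given beyond that citation. Your proposal is therefore not to be compared against any proof in the paper itself, but rather against the construction in the cited reference \cite{de2021approximation}.

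Judged on that basis, your sketch is faithful to the strategy of \cite{de2021approximation}: localized averaged Taylor polynomials on a uniform mesh of width $\mathcal{O}(1/M)$, emulation of univariate monomials by linear combinations of scaled $\tanh$ units (exploiting the odd Taylor expansion), emulation of products via polarization, and assembly into a two-hidden-layer network whose layer widths give exactly the two terms in the $\max$. You have also correctly identified the mechanism producing the $\log^k(\beta M^{s+d+2})$ factor and the threshold $R_k$, namely the competition between the small scaling $\delta$ needed for polynomial accuracy and the $\delta^{-k}$ amplification under $k$-fold differentiation. Since the paper offers no proof of its own, there is no alternative route to contrast with; your plan is essentially the one the authors defer to.
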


\end{document}